\newcommand{\correctspacing}{\singlespacing}
\theoremstyle{plain}
\newtheorem{theorem}{Theorem}[section]
\newtheorem{pro}[theorem]{Proposition}
\newtheorem{lemma}[theorem]{Lemma}
\newtheorem{que}[theorem]{Question}
\newtheorem{assumption}[theorem]{Assumption}
\theoremstyle{definition}
\newtheorem{definition}[theorem]{Definition}
\theoremstyle{definition}
\newtheorem{example}[theorem]{Example}
\DeclareMathOperator{\supp}{supp}
\renewcommand{\P}{\gamma}
\newcommand{\R}{\mathbb{R}}
\newcommand{\F}{\mathcal{F}}
\renewcommand{\H}{\mathcal{H}}
\renewcommand{\phi}{\varphi}
\newcommand{\Fixme}[1]{\marginpar{}}
\newcommand{\gray}[1]{\marginpar{}} 
\renewcommand{\subset}{\subseteq}
\newcommand{\OB}{Ob{\l}{\'o}j}
 \providecommand{\cM}{\mathcal{M}}
\providecommand{\P}{\mathbb{P}}
\providecommand{\NN}{\mathbb{N}}  \providecommand{\RR}{\mathbb{R}}
\providecommand{\QQ}{\mathbb{Q}} 
\providecommand{\cP}{\mathcal{P}}
\providecommand{\cM}{\mathcal{M}} 
 \providecommand{\cM}{\mathcal{M}} 
  \providecommand{\cF}{\mathcal{F}}
\providecommand{\cR}{\mathcal{R}}
\providecommand{\indy}{\mathbf{1}}
\newcommand{\be}{\begin{equation}} \newcommand{\beo}{\begin{equation*}}
\newcommand{\ee}{\end{equation}}    \newcommand{\eeo}{\end{equation*}}
\newcommand{\bea}{\begin{eqnarray}}
\newcommand{\eea}{\end{eqnarray}}
\newcommand{\beao}{\begin{eqnarray*}} \newcommand{\eeao}{\end{eqnarray*}}
\renewcommand{\cM}{\Pi}
\newcommand{\secret}[1]{}
\author{Mathias Beiglb\"ock}
\author{Claus Griessler}
\thanks{We acknowledge financial support through FWF-projects  P26736 and Y00782. We also thank Manu Eder for many helpful comments. }
\date{\today}
\title[A  monotonicity principle]
{A land of  monotone plenty}
\begin{document}
\maketitle
\begin{abstract}
A fundamental concept in optimal transport is $c$-cyclical monotonicity: it allows to link the optimality of transport plans to the geometry of their support sets. Recently, related concepts have been successfully applied in the multi-marginal version of the transport problem as well as in the martingale transport problem which arises from model-independent finance. 

We establish a unifying concept of \emph{$c$-monotonicity / finitistic optimality} which describes the geometric structure  of optimizers of a generalized moment problem. This allows us to strengthen known results in optimal martingale transport and for a transport problem with a continuum of marginals.

\smallskip

If the optimization problem can be formulated as a multi-marginal transport problem, potentially with additional linear constraints, our contribution is parallel to a recent result of Zaev.

\smallskip

\noindent\emph{Keywords:}   cyclical monotonicity, mass transport, moment problem. \\
\emph{MSC (2010):} Primary 60G42, 60G44; Secondary 91G20.
\end{abstract}

\section{Introduction}

 \subsection{Motivation from optimal transport}
Given probabilities $\mu$ and $\nu$ on Polish 
spaces $X$ and  $Y$, and a cost function $c:X\times Y\to \R_+$, 
the Monge-Kan\-to\-ro\-vich problem
is to find a cost minimizing transport plan. More precisely, 
writing $\Pi(\mu,\nu)$  for the set of all measures on
$X\times Y$ with $X$-marginal $\mu$ and $Y$-marginal $\nu$, 
the problem is to find 
\begin{equation}\label{G1}\tag{OT}
\inf \left\{ \int c\, d\P:\P\in \Pi(\mu,\nu) \right\} \, 
\end{equation}
and to identify an optimal transport plan $\P^* \in \Pi(\mu,\nu)$.

The concept of  \emph{$c$-cyclical monotonicity} leads to a \emph{geometric characterization} of optimal couplings. 
Its relevance for \eqref{G1} has been fully
recognized by Gangbo and McCann \cite{GaMc96}, based on earlier work of Knott
and Smith \cite{KnSm92} and R\"uschendorf \cite{Ru96} among others.



A set $\Gamma \subseteq X \times Y$ is $c$-cyclically monotone if any measure $\alpha$, that is finite and supported on finitely many points in $\Gamma$, is a cost-minimizing transport between its marginals. I.e., if $\alpha'$ has the same marginals as $\alpha$, then 
\be
\int c \, d \alpha \leq \int c \, d \alpha'.
\ee
A transport plan $\gamma$ is called $c$-cyclically monotone if it is concentrated on such a set $\Gamma$, i.e.\ if there is such a $\Gamma$ with $\gamma(\Gamma) = 1$.\footnote{The more familiar way of stating $c$-cyclical monotonicity for a set $\Gamma$ is to assert  that  for any  $(x_{1}, y_{1}), \dots, (x_{n}, y_{n}) \in \Gamma$,  with the convention $y_{n+1}=y_1$, 
\beo
\sum_{i=1}^n c(x_{i}, y_{i}) \leq \sum_{i=1}^n c(x_{i}, y_{i+1}).
\eeo
We have used the equivalent formulation above as it is not inherently two-dimensional and serves our exposition more directly. For the equivalence 
cf.\ \cite[Exercise 2.21]{Vi03}.}

Connecting optimality and $c$-cyclical monotonicity is technically intricate. A series of contributions 
(\cite{AmPr03, Pr07, ScTe09,BeGoMaSc09, BiCa10} among others) led to the following clear-cut characterization: 
\begin{theorem}[Monotonicity Principle]\label{LGTrans}
Let $c:X\times Y \to [0,\infty)$ be Borel measurable and assume that $\P\in \Pi(\mu, \nu)$ is a transport plan with finite costs $\int c\, d\P \in \R_+$. Then $\P$ is optimal if and only if $\P$ is $c$-cyclically monotone.
\end{theorem}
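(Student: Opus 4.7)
I would split the two implications, since the easy direction (optimality $\Rightarrow$ $c$-cyclical monotonicity) has been understood since \cite{GaMc96}, while the converse is the delicate part. For the easy direction I would argue by contradiction: suppose $\P\in \Pi(\mu,\nu)$ is optimal with $\int c\, d\P<\infty$ but its support $\Gamma$ is not $c$-cyclically monotone, so that some points $(x_1,y_1),\ldots,(x_n,y_n)\in \Gamma$ and a cyclic shift $\sigma$ satisfy $\sum_i c(x_i,y_{\sigma(i)})<\sum_i c(x_i,y_i)$. Since $c$ is only Borel, pointwise values cannot be manipulated directly; a Lusin-type approximation provides a closed set of nearly full $\P$-mass on which $c$ is continuous, hence disjoint Borel boxes $A_i\times B_i$ of positive $\P$-mass on which $c$ is nearly constant at the corresponding cyclic values. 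Restricting $\P$ to $\bigcup_i (A_i\times B_i)$, equalizing the $X$-marginal masses and cyclically permuting the conditional $Y$-distributions yields a signed measure with vanishing marginals whose integral against $c$ is strictly negative; adding it to $\P$ produces a competitor of smaller cost, contradicting optimality.

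For the converse I would avoid duality-based arguments, since constructing a $c$-convex dual potential pair generally requires integrability assumptions that are not present here. Instead I would follow the variational strategy of \cite{ScTe08,BeGoMaSc09}: given a $c$-cyclically monotone $\P$ with finite cost and a competitor $\P'\in\Pi(\mu,\nu)$, one compares $\int c\,d\P$ with $\int c\,d\P'$ directly. The plan is to disintegrate both plans and identify a measurable family of finite ``test cycles'' drawn from $\spt \P$ that mimics the mass rearrangement by which $\P'$ differs from $\P$; summing the pointwise $c$-cyclical monotonicity inequalities and integrating delivers $\int c\,d\P \leq \int c\,d\P'$. This is the prototype of the unifying optimality principle which the paper formalizes for the generalized moment problem, so in the present framework it would suffice to cast \eqref{G1} as a GMP and invoke that principle.

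The main obstacle is the absence of regularity on $c$ beyond Borel measurability: there is neither continuity nor an integrable upper envelope, so classical potentials need not exist and the measurable selection step is delicate. The crux is a combinatorial measure-theoretic lemma that, given $(\P,\P')$, extracts cycles whose finitistic inequalities can be summed without producing rearrangements of infinite $c$-cost. This is exactly where the hypothesis $\int c\,d\P\in\R_+$ is decisive, since admissibility of the competing cycles must be enforced throughout the construction; its removal is known to break the equivalence, cf.\ the counterexamples discussed in \cite{AmPr03}.
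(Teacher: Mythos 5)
The paper never proves Theorem~\ref{LGTrans}: it is quoted as a known synthesis of \cite{AmPr03,Pr07,ScTe08,BeGoMaSc09}, and the authors explicitly state that their framework recovers only ``one half'' of it, namely that optimality implies $c$-cyclical monotonicity. That half does fall out cleanly from Theorem~\ref{MainTheorem} without the Lusin machinery you deploy: take $E = X\times Y$ and $\F_1$ as in Section~\ref{MuMa}; Assumption~\ref{BoundedAssumption} holds with $g\equiv 1$ and continuity of the test functions, and a finitely supported $\alpha$ has $\alpha'$ as a competitor precisely when $\alpha$ and $\alpha'$ share both marginals, so ``finitely minimal / $c$-monotone'' for $(\F_1,c)$ coincides with $c$-cyclical monotonicity by the remark following Theorem~\ref{LGTrans} and \cite[Thm.~6.1.4]{AmGiSa08}. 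Your Lusin-approximation route is not wrong, but it attacks a Borel-measurability obstacle that Theorem~\ref{MainTheorem} has already absorbed via the Kellerer-type Lemma~\ref{KellLemma} and the analytic selection argument, so it duplicates work the paper's proof already does in greater generality.

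The genuine gap is in your ``converse''. You assert that casting \eqref{G1} as a GMP and invoking the paper's variational principle ``would suffice,'' but Theorem~\ref{MainTheorem} is a one-way implication: optimality~$\Rightarrow$~finitistic optimality. It says nothing about the reverse, and the paper is careful never to claim it (indeed, \cite{BeJu13} needs extra boundedness hypotheses to obtain a converse even in the two-period martingale case). Your direct argument for sufficiency is also only a restatement of the desired inequality: you posit ``a measurable family of finite test cycles drawn from $\spt\P$ that mimics the mass rearrangement by which $\P'$ differs from $\P$'' and then integrate, but producing such a family and controlling the summation so that no rearrangement of infinite $c$-cost appears is exactly the technical heart of \cite{ScTe08,BeGoMaSc09}, which proceeds through duality and careful measure decomposition rather than through cycle selection from the support. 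As written, the converse half has no proof, and the claim that the paper's principle delivers it is a misreading of what Theorem~\ref{MainTheorem} establishes.
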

The importance of this result stems from the observation that it is often an elementary  and feasible task to see whether a transport behaves optimally on a  finite number of points. But  this would be a priori of no help for a problem where single points do not carry positive mass.  Theorem \ref{LGTrans}  provides the required remedy to this obstacle as it establishes  the connection to optimality on a ``pointwise'' level.

 \subsection{Recent developments and aims of this article}

Recently several variants of \eqref{G1} have been discussed in the literature: the multi-marginal transport problem (see \cite{Ke84, Car03, Pa11fm,Pa12fm, KiPa13}), the martingale transport problem (\cite{BeHePe13, GaHeTo14, DoSo14a, DoSo14b, BeJu16, BeCoHu16, HeTo13, BeNuTo16,  NuSt16, GhKiLi16} among others), and problems where a continuum of marginals is prescribed, \cite{Pa13,Pa13b}.  Having cyclical monotonicity in mind, the problem in  \cite{Pa13} seems of particular interest: here, Pass presents a solution that is of Monge-type and appears very natural, yet the proof of its optimality and uniqueness appears rather technical and relies on assumptions that might  be difficult to verify in practice. But it is apparent that this solution is the only  transport that fulfills an infinite-dimensional analogue of cyclical monotonicity. We were thus drawn to the question  whether a suitable notion of $c$-cyclical monotonicity could  prove useful for such extended problems  by reducing the technical level and leading to stronger results.

The main goal of this article is therefore to establish  a \emph{monotonicity principle}  as an analogue of the ``necessary"-part of Theorem \ref{LGTrans} in a rather wide generality.  
More precisely, we use the framework of a generalized moment problem \eqref{Primal}, define a general notion of $c$-monotonicity and then establish that optimizers are $c$-monotone. To this end, we build on ideas from \cite{BeJu16}, where, mimicking the idea of $c$-cyclical monotonicity,  a notion of ``finitistic optimality'' was already introduced for the martingale transport problem and optimizers were shown to fulfill that criterion.\footnote{In fact, in many instances of the martingale transport problem, finitistic optimality is also sufficient for optimality.}   

Our Theorem \ref{MainTheorem} allows to obtain improved versions of  the results from  \cite{BeJu16} and  \cite[Proposition 2.3]{KiPa13},  and it includes  one implicaton of the classical result stated in Theorem \ref{LGTrans}.  Finally, we use the result to prove a strengthened version of  Pass' Monge-type result.  In contrast to Pass' original derivation we do not require additional assumptions on the payoff functional or the prescribed marginals which might be difficult to verify in his intended applications.  

We note that, although  \eqref{Primal} constitutes a classical problem in probability,  and  it is  well known that \eqref{G1} and its variants  fit into this framework (see e.g.\   \cite{Ke68} and \cite{La10}), the general optimality criterion of $c$-monotonicity we state in Theorem \ref{MainTheorem} is new to the best of our knowledge.


We point out a particular novelty of the approach in this article: in all the instances where the monotonicity principle was previously known, the minimization problem \eqref{Primal} admits a well understood dual problem and it is known that there is no duality gap. In the literature  on the Monge-Kantorovich problem,  it is well known that the absence of a duality gap can be used to show that optimal transport plans are cyclically monotone, see e.g.\ \cite[Exercise 2.38]{Vi03}. 
In fact, assuming certain regularity assumptions, this argument could be used to establish Theorem \ref{MainTheorem} whenever there is no duality gap. The advantage of the approach presented below is twofold. On the one hand it allows to derive the desired implication virtually without regularity assumptions. More importantly, it is applicable also in situations where duality is either unknown or known to fail (cf.\ \cite[Section 3.4]{AnNa87} for such cases). 



We conclude this section by a precise statement of the problem, the definition of $c$-monotonicity and the optimality criterion of Theorem \ref{MainTheorem}. For readability we postpone its proof to the last section, Section 4. Section 2 shows how some problems can be written in our framework. 
We also give a counterexample on the ``sufficiency''-part of Theorem \ref{LGTrans} in the general situation. Section 3  deals with  the problem from \cite{Pa13} in  light of Theorem \ref{MainTheorem}.

\subsection{The basic optimization problem}
Let  $E$ be a  Polish space and $c: E \rightarrow \R $ a Borel measurable cost function. 

We fix a set  $\cF$ of Borel-measurable functions on  $E$ and  write  $\cM_{\cF}$ for the set of  probability measures $\P$ on $E$ for which $\int f \; d\P = 0 $ for all $f\in \cF$.\footnote{
By asserting that $\int f\, d\P= 0$ we implicitly understand that this integral exists.
Throughout this article we use the convention $+\infty-\infty=+\infty$.
} The  generalized moment problem is then to minimize    the total cost choosing from $\cM_{\cF}$, i.e.
\begin{align}\label{Primal}\tag{GMP}
\min_{ \P\in \cM_{\cF}} \int c \; d\P.
\end{align}

\subsection{A general concept of $c$-monotonity and main result}
\secret{
A transport plan $\gamma$ is called $c$-cyclically monotone when it is concentrated on a $c$-cyclically monotone set $\Gamma$, i.e.\ if $(x_1, y_1), \dots, (x_n, y_n)$ are in $\Gamma$, then, with $y_{n+1}= y_n$,  
\be
\sum_{i=1}^n c(x_i, y_i) \leq \sum_{i=1}^n c(x_i, y_{i+1}).
\ee 
There is an  equivalent way of defining $c$-cyclical monotonicity by means of finitely supported measures: a set  $\Gamma$ is $c$-cyclically monotone iff each   measure $\alpha$, which is finite and  concentrated on finitely many elements of $\Gamma$, is a cost minimizing transport between its marginals; that is, if  $\alpha'$ has the same marginals as $\alpha$,  then
$$\int c \; d\alpha \leq \int c \; d\alpha'.$$
The equivalence of the definitions follows easily from e.g.\     \cite[Thm. 6.1.4]{AmGiSa08}. 
The second definition could be expressed also by using the set $\F_1$ and additionally requiring that $\alpha$ and $\alpha'$ have the same total mass.}
Our  general definition of $c$-monotonicity  applicable to \eqref{Primal} is the following:

\begin{definition}\label{DefinitionMon}
For a measure $\alpha$ on the Polish space $E$ and a set $\cF$ of measurable functions $E \rightarrow \RR$, a competitor of $\alpha$ is a measure $\alpha'$ on $E$ such that $\alpha (E) = \alpha'(E)$, and for all $f \in \cF$ one has 
\begin{align}
\int f \, d\alpha = \int f \, d\alpha'.
\end{align}
If, in addition, $\alpha$ is finitely supported, i.e. concentrated on finitely many points, we require this property also for a competitor. \\
A set $\Gamma \subseteq E$ is called \emph{finitely minimal / $c$-monotone} if each  measure $\alpha$, which is finite and  concentrated on finitely many points in $\Gamma$, is cost minimizing amongst its  competitors.
A measure $\P$ is called \emph{finitely minimal/ $c$-monotone} if it is concentrated on a  finitely minimal / $c$-monotone set.
\end{definition}


Establishing that optimizers of problem \eqref{Primal} are finitely minimal will need an assumption on the family $\cF$: 
\begin{assumption}\label{BoundedAssumption}
\begin{enumerate}
\item There exists a function $g: E\to [0, \infty)$ such that each element of $\cF$ is bounded by some multiple of $g$. I.e.,\ for each $f\in \cF$ there is a constant $a_f \in \R_+$ such that $|f|\leq a_f g$. 
\item All functions in $\cF$ are continuous, or $\cF$ is at most countable.
\end{enumerate}
\end{assumption}
These properties are satisfied in all examples encountered in this article.

\begin{theorem}\label{MainTheorem}
Let $E$ be a  Polish space and $c: E \rightarrow (-\infty, +\infty]$ a Borel measurable function. Let $\cF$ be a family of Borel-measurable functions on $E$ satisfying Assumption \ref{BoundedAssumption} and assume that  $\P^*$ is such that
\beo
\min_{\P \in \cM_{\cF} } \int c \; d\P = \int c \; d\gamma^* \in \RR.
\eeo
Then $\P^*$ is finitely minimal / $c$-monotone. 
\end{theorem}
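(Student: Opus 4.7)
The plan is to argue by contradiction, in the spirit of Schachermayer--Teichmann's proof of $c$-cyclical monotonicity for classical transport: assuming $\P^*$ is a minimizer but fails to be concentrated on any $c$-monotone set, I would construct a non-negative measure $\tilde{\P} = \P^* + \eps(\mu^+ - \mu^-)$ lying in $\cM_{\cF}$ with $\int c\,d\tilde{\P} < \int c\,d\P^*$, contradicting optimality. The perturbation $\mu^+ - \mu^-$ is to be assembled by integrating many local swaps $\alpha \mapsto \alpha'$ drawn from the set of bad finite competitor pairs.

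First I would set up the Borel structure of bad configurations. For each $N, M \geq 1$ let
\beo
B_{N,M} := \Bigl\{ (x,y,a,b) \in E^N \times E^M \times \R_+^N \times \R_+^M : \textstyle \sum_i a_i \delta_{x_i} \text{ admits } \sum_j b_j \delta_{y_j} \text{ as a $c$-better competitor} \Bigr\}.
\eeo
Under Assumption \ref{BoundedAssumption}, $B_{N,M}$ is Borel: the moment constraints $\int f\,d\alpha = \int f\,d\alpha'$ are Borel because $\cF$ is countable or consists of continuous functions, while Borel-measurability of $c$ handles the strict cost inequality. The key measure-theoretic claim is then that \emph{if $\P^*$ is not finitely $c$-monotone, then for some $N, M$ the analytic projection $B^x_{N,M}$ of $B_{N,M}$ onto the $x$-coordinate has positive outer $(\P^*)^{\otimes N}$-measure.} For the contrapositive, starting from Borel envelopes $A_{N,M} \supseteq B^x_{N,M}$ with $(\P^*)^{\otimes N}(A_{N,M}) = 0$, a diagonal Fubini argument over countably many $N, M$ and permutations of coordinates should yield a Borel $\Gamma \subseteq E$ of full $\P^*$-measure with $\Gamma^N \cap B^x_{N,M} = \emptyset$ for all $N, M$, so that $\Gamma$ would witness finite $c$-monotonicity of $\P^*$.

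Once a positive-measure bad set is secured, I would invoke the Jankov--von Neumann analytic selection theorem to obtain a universally measurable map $x \mapsto (y(x), a(x), b(x))$ defined on a Borel $A \subseteq E^N$ with $(\P^*)^{\otimes N}(A) > 0$. After a countable decomposition of $A$ one may assume that the weights $a_i(\cdot)$ are uniformly bounded by some $K$ and that the cost gap $\sum_i a_i(x) c(x_i) - \sum_j b_j(x) c(y_j(x))$ exceeds a fixed $\delta > 0$ on $A$. Set
\beao
\mu^- & := & \int_A \textstyle\sum_i a_i(x)\, \delta_{x_i}\, d(\P^*)^{\otimes N}(x), \\
\mu^+ & := & \int_A \textstyle\sum_j b_j(x)\, \delta_{y_j(x)}\, d(\P^*)^{\otimes N}(x).
\eeao
By construction $\mu^-$ and $\mu^+$ have the same total mass and the same $\cF$-integrals, while $\int c\,d\mu^- - \int c\,d\mu^+ \geq \delta \cdot (\P^*)^{\otimes N}(A) > 0$. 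A Radon--Nikodym computation using the bound $a_i \leq K$ then gives $\mu^- \leq C \P^*$ for some $C < \infty$, so that for $\eps \in (0, 1/C]$ the measure $\tilde{\P} := \P^* + \eps(\mu^+ - \mu^-)$ is a non-negative probability in $\cM_{\cF}$ with strictly smaller $c$-integral than $\P^*$, the desired contradiction.

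The main obstacle is the positive-measure reduction: passing from the pointwise failure of $c$-monotonicity to a bad set of positive measure, uniformly across all $N, M$, is subtle because $B^x_{N,M}$ is only analytic and, as the example of the diagonal in $E^2$ shows, a product-measure null set need not be of the form $E^N \setminus \Gamma^N$. The diagonalization must therefore exploit finer properties such as permutation-symmetry and downward closure of bad sets to coordinate countably many Borel envelopes and Fubini slicings simultaneously. A secondary technical point in the perturbation step is the density bound $\mu^- \leq C\P^*$, which needs both the uniform weight bound and, when $c$ is unbounded, a preliminary truncation of the cost to $\{|c| \leq K\}$ so that the cost gap $\delta$ survives.
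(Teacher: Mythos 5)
Your overall architecture is right — analytic bad sets, Jankov--von Neumann selection, perturbation of $\P^*$ by the difference of two finite-atom measures with equal $\cF$-moments — and the perturbation step itself is sound (the bound $\mu^- \leq KN\P^*$ does come out of $a_i \leq K$ together with $p_i((\P^*)^{\otimes N}) = \P^*$). But the ``positive-measure reduction'' you flag as subtle is in fact false as you have stated it, and fixing it requires a genuinely different tool than the product measure.

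Concretely, you want to argue: if $(\P^*)^{\otimes N}(A_{N,M}) = 0$ for Borel envelopes of all the bad projections, then there is a full-measure $\Gamma$ with $\Gamma^N \cap B^x_{N,M} = \emptyset$. This implication is false. Take $\P^*$ atomless and suppose the bad set $B^x_{2,M}$ is (contained in) the diagonal of $E^2$. It is permutation-invariant, has $(\P^*)^{\otimes 2}$-measure zero, yet for \emph{every} nonempty $\Gamma$ one has $\Gamma^2 \cap \Delta \neq \emptyset$. No Fubini slicing or coordinate-wise diagonalization can fix this: product-measure nullity is simply the wrong notion of smallness in $E^N$. The symmetry and downward-closure of the bad sets that you hope to exploit are also satisfied by the diagonal, so they do not rescue the step. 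The correct dichotomy is the one from Kellerer/BGMS (the paper's Lemma 3.1): for an analytic $M \subseteq E^l$, either $M \subseteq \bigcup_i p_i^{-1}(M_i)$ for $\P^*$-null $M_i$ — which is precisely the statement that $M$ avoids $\Gamma^l$ for a full-measure $\Gamma$ — or there exists a finite measure $\eta$ on $E^l$ with $\eta(M) > 0$ and \emph{all} marginals $p_i(\eta) \leq \P^*$. Case (ii) is strictly weaker than positive product-measure (the diagonal example lives in case (ii) while having product-measure zero), and this weaker hypothesis is exactly what lets the perturbation argument close: you integrate the selected competitor pairs against $\eta$ rather than against $(\P^*)^{\otimes N}|_A$, and the marginal domination $p_i(\eta) \leq \P^*$ hands you $\omega \leq \P^*$ directly. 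The paper also imposes the uniform bounds $\alpha(E) \leq 1$ and $\int g\,d\alpha \leq l$ inside the definition of the bad set before projecting, which makes the moment-transfer identities $\int f\,d\omega = \int f\,d\omega'$ hold without any post hoc truncation; your ``countable decomposition of $A$'' accomplishes something similar but would have to be carried out carefully to preserve strictness of the cost gap.
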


\subsection{Connection with \cite{Za15}}\label{ZaevSection}
In independent work,  Zaev \cite{Za15}  obtains (among a number of further developments) a result which is related to Theorem \ref{MainTheorem}. 
 His article  is concerned with the multi-marginal transport problem described in Section \ref{MuMa},  allowing for additional linear constraints. In our notation this corresponds to problem \eqref{Primal} on a set $E$ which is a product $X_1 \times \ldots \times X_n$ of Polish probability spaces and where $\F$ is a superset of the set $\F_2$ defined in \eqref{MultF}; several important extensions of the transport problem can be phrased in this form. Under continuity and (weak) integrability assumptions Zaev establishes the existence of an optimizer, an extension of the classical Monge-Kantorovich duality as well as a necessary geometric condition for optimizers. The latter statement is equivalent to the assertion of Theorem \ref{MainTheorem} (applied to the setup of \cite{Za15}). The proof given in \cite{Za15} is based on his duality  result and different from the approach pursued here.

\section{Examples}

\subsection{Optimal transport and its multi-marginal version}\label{MuMa}

The Monge-Kantorovich problem \eqref{G1} fits the framework of \eqref{Primal}:  a measure $\P$  on $E= X \times Y$ is a transport plan in $\Pi(\mu, \nu)$ if and only if  
$$ \int \phi (x)\, d\P (x,y)=\int \phi(x)\, d\mu(x),$$
and 
$$\int \psi (y)\, d\P (x,y)=\int \psi(y)\, d\nu(y)$$
for all continuous bounded functions $\phi: X\to \R, \psi:Y\to \R$. Therefore \eqref{G1} is equivalent to \eqref{Primal} with 
$$ \F_1=\left\{ \phi\circ p_X - \int \phi \, d\mu : \phi \in C_b(X) \right\}
\cup
\left\{\psi \circ p_Y-\int \psi\, d\nu:  \psi \in C_b(Y)\right\}.$$ 
Regarding its statement, the multi-marginal problem is mainly an extension in notation: $\mu_1, \ldots, \mu_n$ are probability measures on Polish spaces $X_1, \ldots, X_n$, the  set $\Pi(\mu_1, \ldots, \mu_n)$
consists  of the probability measures $\P$ on $E=X_1\times \ldots \times X_n$ with  $p_i(\P)= \mu_i$ for $ i= 1,\ldots, n$,  and the problem is to find
\begin{equation}\label{MultG1}
\inf\left\{ \int c\, d\P:\P\in \Pi(\mu_1,\ldots, \mu_n)\right\},\,
\end{equation}
which is equivalent to \eqref{Primal} with 
\begin{align}\label{MultF}
\F_2= \left\{  \phi \circ p_i-\int \phi\, d\mu_i: \phi\in C_b(X_i), 1\leq i \leq n\right\}.
\end{align}

Note that $c$-monotonicity for these problems is just cyclical monotonicity as it is stated in the introduction.
	
\secret{
\subsection{Optimal transport in the continuum marginal case} \label{OptTransCont}

In \cite{Pa13, Pa13b}, Pass deals with optimal transport when a continuum of marginals is given. 
Specifically, in \cite{Pa13} the following problem is considered: 
for $I= [0,T]$, given a family  $(\mu_{t})_{t\in I }$ of probability measures on $\RR$ 
and a strictly concave function $h: \RR \rightarrow \RR$, determine
\begin{align}\label{Pass} \tag{B}
\inf_{\P \in \Pi_{C}(\mu_{t})}  \int h \Bigl( \int_{0}^T f(t) \, dt \Bigr) \, d\P(f),
\end{align}
where $\Pi_{C}( \mu_{t} )$ denotes the set of probability measures on $C[0,T]$  that have marginals $(\mu_{t})_{t\in I}$.
The family $(\mu_{t})_{t\in I}$ can be assumed to be weakly continuous, as otherwise $\Pi_{C}( \mu_{t} )$ is empty.
Again, the problem is  an instance of \eqref{Primal}, this time with  $E=C[0,T]$ and 
 \begin{align}\label{PassF}
 \F_3= \left\{  \phi \circ p_t-\int \phi\, d\mu_t: \phi\in C_b(\R), t\in [0,T]\right\}.
 \end{align}
Pass establishes a seemingly natural solution, but with a proof building on several technical assumptions.
Denoting by $q_{t}: (0,1) \rightarrow \RR$ the quantile function\footnote{I.e.\ $q_t$ is the generalized inverse of the cumulative distribution function of $\mu_t$: $q_t(x)= \inf\{ y: \mu_{t}\bigl( (-\infty, y]\bigr) \geq x \}$}  of $\mu_{t}$, the path of an $x$-quantile evolves continuously over time $t$.
Therefore the map $q: (0,1) \rightarrow C[0,T]$, $x \mapsto q_{.}(x)$ pushes forward Lebesgue measure $\lambda$ from $(0,1)$ to a measure $\pi^*$ uniformly distributed on the quantile paths of $(\mu_{t})$. Pass shows that   $\pi^*$ is the unique minimizer of \eqref{Pass} and gives several surprising  applications from parabolic equations to mathematical finance and quantum physics. 
On the technical side, however, his proof assumes that the quantile functions satisfy a property of uniform Riemann-integrability which may be difficult to verify in practice.  
In section 4, we will discuss this problem using Theorem \ref{MainTheorem} and establish the following strengthened version:

\begin{theorem}\label{Pass-Theorem}
Let $h:\RR \rightarrow \RR$ be concave and $( \mu_{t})_{t\in I}$ a family of probability measures on $\RR$, weakly continuous in $t$ and such that $$\int_0^T \int |x|\, d\mu_t(x) \, dt < \infty, ~ \text{ and } \int |h|  \, d\mu_t  < \infty  ~\text{ for all } t\in [0,T].$$ Then $\pi^{*}$ is a minimizer of  \eqref{Pass}. 
If the infimum in \eqref{Pass}  is finite and $h$ is strictly concave,  then $\pi^*$ is the unique minimizer.
\end{theorem}
}

\subsection{Model-independent finance -- Martingale Transport}\label{MIMT} 

Starting with the Monge-Kantorovich problem, but looking for an optimizer only among martingale measures, yields the problem of optimal martingale transport (in its most basic formulation). It is closely related to model independent finance, a field that is concerned with determining the possible price range of financial assets under the martingale-paradigm of mathematical finance, see for instance \cite{Ho11, AcBePeSc16, HeTo13, BoNu15}.
Roughly speaking, the payoff of a financial asset is represented by a cost function depending on the evolution of the price of an underlying stock. Due to the martingale-pricing paradigm, an arbitrage-free price of the asset is computed as its expected payoff under a martingale measure that is calibrated to market information. The task is hence to find minimum and maximum prices with the help of suitable martingale measures.

Here we have $E=\R_+^n$ or $\R^n$, and  $c: E \to \R$. A probability measure $\P$ on $E$ is a martingale measure 
iff for each $l < n$ one has equality and real values in 
\beo 
\int x_{l+1} \; \varphi (x_{1}, \dots, x_{l})    \; d\P = \int x_{l} \; \varphi (x_{1}, \dots, x_{l})  \; d\P
\eeo
for each continuous bounded   function $\varphi: \RR^l \rightarrow \RR$.
If we can observe the  current value $\xi \in \RR$ of the stock price,  we  only have to  consider martingales where all marginals have expectation $\xi$.

We therefore consider 
\begin{align}
\F^{(mart)} =  \{ p_{1} - \xi \} \cup  \left\{  (p_{l+1}-p_{l}) \, \left( \varphi \circ  p_{\{1, \dots, l \}}\right)  :   \varphi \in C_b\left( \RR^l \right) ,1\leq l < n \right\}.   
\end{align}



The martingale condition (with expectation $\xi$) then corresponds to  $\int f\, d\P=0$ for all $f\in \F^{(mart)}$. 

Further market information can be encoded through additional functions. 
For instance, it is often  a reasonable idealization to assume that the marginal distributions of the stock price at  particular time instances can be derived from market data. The case of a given marginal distribution at the  terminal time $t_n$  has been particularly intriguing.\footnote{This case is naturally connected to the Skorokhod embedding problem, we refer to the survey of \OB\ \cite{Ob04}.} In the present context this corresponds to $p_n(\P)=\mu$ for some probability  $\mu$, i.e.\ specifying 
\begin{align}\label{MultF}
\H= \left\{  \phi \circ p_n-\int \phi\, d\mu: \phi\in C_b(\RR)\right\}.
\end{align}
More recently also the case with  all intermediate marginals given  has been considered. This corresponds to $\H=\F_2$ (where $X_1= \ldots= X_n=\R$).
The principal  problem of model independent finance can hence be seen as \eqref{Primal} with $\cF_4=\F^{(mart)}\cup\H$. 

The article \cite{BeJu16} discussed the case with $\F = \F^{(mart)} \cup \F_2$ and $n=2$. The notion of \emph{finite optimality}  introduced there can easily be seen to be equivalent with $c$-monotonicity for this problem. In fact, that notion and the variational lemma  in \cite{BeJu16} characterizing optimality via finite optimality have served as a basis for Definition \ref{DefinitionMon} and Theorem \ref{MainTheorem}.

\secret{
Note that for finitely supported finite measures, the conditions (1) and (2) above are equivalent to (1) and (2'):
{
\renewcommand{\labelitemi}{(2')}
\begin{itemize}
\item for each bounded Borel-measurable $\varphi$, we have 
\beo
\int  (x_{2} - x_{1})\; f (x_{1})   \, d \alpha (x_{1}, x_{2}) = 
\int  (x_{2}-x_{1}) \; f (x_{1})  \, d \alpha' (x_{1}, x_{2}).
\eeo
\end{itemize}
}
Now (1) and (2') can be written in short as
\beo
\int  f \; d \alpha = \int f \; d \alpha',  ~~ \forall f\in \F^{(m)}\cup \F_2.
\eeo
}

\subsection{A counterexample to sufficiency}
It is natural to ask whether the converse of Theorem \ref{MainTheorem} holds true as well, i.e.\ if finite optimality is also sufficient for optimality overall, at least under additional regularity assumptions on the function $c$ and the underlying spaces. This is not the case as shown by the following counterexample in the context of transport plans which are invariant under group actions (see e.g.\ \cite{KoZa13}). 
\begin{example}  
Let $X=Y=(0,1)$, and $\mu = \nu = \lambda$. For some irrational number $ \xi > 0 $,  let $T: (0,1) \rightarrow (0,1)$ denote the operator $x \mapsto x \oplus \xi$ (addition of $\xi$ modulo 1). We want to minimize the cost $c(x,y) = (y-x)^2$ among the transport plans $\pi$ that are $T \otimes T$-invariant, i.e.\ the transport plans $\pi$ for which $\pi = \bigl( T\otimes T \bigr) (\pi)$. These transport plans are characterized as those for which 
\beo
\int h \bigl( T\otimes T \bigr) \, d\pi = \int h \, d \pi ~\text{ for all } h\in C_{b}(X\times Y).
\eeo
The unique minimizer here is the uniform distribution on the diagonal, but each other transport plan is also concentrated on a finitely minimal set, as each subset of $X\times Y$ is finitely minimal: every finite and finitely supported $\alpha$ is its only competitor. For a competitor $\alpha'$, the signed measure $\alpha - \alpha'$ is  $T\otimes T$-invariant, and hence   a continuous measure. The only finitely supported such measure is zero, hence $\alpha = \alpha'$.
\end{example}

\section{A continuum marginal transport problem revisited}

In this section we discuss in some detail the problem  introduced by Pass  in \cite{Pa13}. For an interval $I = [0, T]$ we consider  a family of probability measures on $\RR$, $(\mu_t)_{t\in I}$ such that $t\mapsto\mu_t$ is weakly continuous. 
We consider the  space $\cR [0,T]$ of  Riemann-integrable functions
 $[0,T]\rightarrow \RR$ and 
write $\Pi_{\cR}( \mu_{t} )$ for the set of probability measures with marginals $(\mu_{t})_{t\in I}$ on the space $\cR [0,T]$.  ($\Pi_{\cR}( \mu_{t} )$ is non-empty, see Lemma \ref{NonEmptyR} below.)

Given a concave function $h: \RR \mapsto \RR$, the goal is to determine
\begin{align}\label{Pass} \tag{B}
\inf_{\P \in \Pi_{\cR}(\mu_{t})}  \int h \Bigl( \int_{0}^T f(t) \, dt \Bigr) \, d\P(f).
\end{align}
$\cR [0,T]$ is not a Polish space, and so this problem is not exactly  an instance of \eqref{Primal}. We will nevertheless be able to use Theorem \ref{MainTheorem} for deriving an optimality result.

We denote by $q_{t}: (0,1) \rightarrow \RR$ the quantile function 
of $\mu_{t}$, i.e.\ $q_t$ is the generalized inverse of $\mu_t$'s distribution function:
$q_t(x)= \inf\{ y: \mu_{t}\bigl( (-\infty, y]\bigr) \geq x \}$. 
The map $q: (0,1) \rightarrow \RR^{[0,T]}$, $x \mapsto q_{.}(x)$ pushes forward Lebesgue measure $\lambda$ from $(0,1)$ to a measure $\pi^*$ on $\RR^{[0,T]}$ that can be described as a uniform distribution on the quantile paths of $(\mu_{t})$. 

Notably $t\mapsto q_t(x)$ is in general not continuous\footnote{Continuity of $t\mapsto q_t(x)$ is claimed to follow from continuity of $t\mapsto \mu_t$ in \cite{Pa13}, but this is a glitch.}, consider e.g.\ $T=1$ and $\mu_t=t\delta_{\{0\}} + (1-t)\delta_{\{1\}}$ (see Example \ref{NewCont}  for a counterexample  with a family of absolutely continuous measures).
In fact,  there might even be quantile paths that are not Riemann-integrable, we present the (somewhat lengthy) argument in  Example \ref{NotRINT} at the end of this section.

Nevertheless, the measure $\pi^*$ can be regarded as a measure on $\cR[0,T]$ due to 

\begin{lemma}\label{NonEmptyR}
If $(\mu_{t})_{t\in [0,T]}$ is weakly continuous, then  for the measure $\pi^*$ described above, $\pi^*\bigl(\cR[0,T] \bigr)= 1$.
\end{lemma}
\begin{proof}
First note that each quantile path is bounded on the compact interval $[0,T]$. This is an easy consequence of weak continuity.\footnote{ Otherwise there would be a convergent sequence $t_n \rightarrow t$ such that either $q_{t_n}(x) \rightarrow \infty$ or $q_{t_n}(x) \rightarrow -\infty$. We only consider the first case: pick  a $y> x$ and set $G= q_t(y)$. There is an $N$ such that for all $n\geq N$ we have $q_{t_n}(x) > 2G$, and hence also $q_{t_n}(x') > 2G$ for all $x'>x$. We can find an $x' \in (x,y)$ such that $q_t(.)$ is continuous in $x'$. Therefore, $q_{t_n}(x')$ should converge to $q_t(x')$ but this is impossible as 
$q_t(x') \leq G < 2G < q_{t_n}(x')$.
}
 We hence need to show that for $\lambda$-a.e. $x\in (0,1)$ the path $t\mapsto q_t(x)$ is continuous in $\lambda$-a.e. $t\in [0,1]$.
Set 
\beo 
U = \{ (x,t)\in (0,1)\times [0,1]: q_{.}(x) \text{ not continuous in } t \}
\eeo
and 
\beo
U' = \{ (x,t)\in (0,1)\times [0,1]: q_{t}(.) \text{ not continuous in } x \}.
\eeo
We have $U \subseteq U'$ due to weak continuity. 
$U'$ is a Borel set: note that $q_{t}(.)$ is continuous in $x$ if and only if it is right-continuous in $x$, and the function $(x,t) \mapsto q_t(x)$ is measurable. So $U'$ is the complement of the  Borel set
\beo
\bigcap_n \bigcup_m \; \Bigl\{ (x,t): q_t(x+\dfrac{1}{m}) - q_t(x) < \dfrac{1}{n} \Bigr\}.
\eeo
Moreover, $U'$ is a null set by Fubini's theorem. Therefore, $U$ is a Lebesgue null set, and we must have 
$\lambda (U_x)$ for $\lambda$-a.e. $x$.
\end{proof}

  Pass shows that   $\pi^*$ is the unique minimizer of \eqref{Pass}. Among other conditions, he is building on the assumption  that the quantile functions satisfy a property of uniform Riemann-integrability which may be difficult to verify in practice.  We will show the following strengthened result:

\begin{theorem}\label{Pass-Theorem}
Let $h:\RR \rightarrow \RR$ be concave and $( \mu_{t})_{t\in I}$ a family of probability measures on $\RR$, weakly continuous in $t$ and such that $$\int_0^T \int |x|\, d\mu_t(x) \, dt < \infty, ~ \text{ and } \int |h|  \, d\mu_t  < \infty  ~\text{ for all } t\in [0,T].$$ Then $\pi^{*}$ is a minimizer of  \eqref{Pass}. 
If the infimum in \eqref{Pass}  is finite and $h$ is strictly concave,  then $\pi^*$ is the unique minimizer.
\end{theorem}

W.l.o.g. we work with  $I=[0,T]=[0,1]$ from now on.

For completeness and to fix ideas, we discuss a result that can be seen as a finite-dimensional predecessor to \cite{Pa13} and has been well-known for at least several decades. We mention the note by  \cite{Kaa02} for a simple geometric proof and further references, and for a   more general result   \cite{Car03}. We denote by $\pi^*_{n}$ the $n$-dimensional analogue of the measure $\pi^{*}$ introduced above. I.e.,  given $n$ probability measures $\mu_{{1}}, \dots, \mu_{{n}}$ on $\RR$, $\pi^*_{n}$ is the push forward of Lebesgue measure $\lambda$ on $(0,1)$ to $\RR^n$ via  $x \mapsto \bigl( q_{1}(x), \dots, q_{n}(x) \bigr)$, where, as before, $q_{i}$ is the quantile function of $\mu_{i}$.

\begin{theorem}\label{Folk-Theorem}
Let $h: \RR \rightarrow \RR$  be strictly concave and $\mu_{1}, \dots, \mu_{n}$ be probability measures on $\RR$ such that 
$$
\int  |x| \, d \mu_{i} < \infty, ~~ \int |h| \, d \mu_i < \infty, ~\text{ for }1 \leq i \leq n.
$$
Then $\pi^{*}_{n}$  is the unique minimizer of
\be \label{Folk-Problem}
\inf_{\P \in \Pi_{n}(\mu_{{1}}, \dots, \mu_{{n}})} \int h(x_{1}+\dots+x_{n}) \, d\P(x).
\ee
\end{theorem}


It is intuitive to see why the monotonicity principle comes in useful for Theorem \ref{Folk-Theorem}. Finite  optimality  of a set $A$  (in $\RR^n$)  implies that $A$ must be a monotone set, i.e.\ $\leq$ must be a total order on $A$: if $f$ and $g$ are both in  $A$, then either $f \leq g$ or $g \leq f$. Else,  set  $f' = \max\{ f, g \}$ and  $g' =  \min \{ f, g \}$, and let $\alpha$ be the measure $\frac{1}{2} \delta_{f} + \frac{1}{2} \delta_{g}$ and $\alpha'$ the measure $ \frac{1}{2}\delta_{f'}+ \frac{1}{2}\delta_{g'}$. Then $\alpha'$ is a measure with the same marginals as $\alpha$ (on $\RR^n$, or $\cR [0,1]$, respectively).  But due to strict concavity of $h$, it is easy to see that $\alpha'$ leads to lower costs than $\alpha$ in  both cases \eqref{Folk-Problem} and \eqref{Pass}, contradicting the definition of finite optimality.    The argument of optimality of $\pi^{*}_{n}$ (or $\pi^{*}$, respectively) is then completed by another well-known fact, a proof of which we include for the convenience of the reader: 
\begin{lemma}\label{Uniqueness-Lemma}
Let  $\P$ be a probability measure on $\RR^n$ with  marginals $\mu_{1}, \dots, \mu_{n}$. If there is a monotone Borel set $M$ with $\gamma(M)=1$, then $\P = \pi^{*}_{n}$.\\
Let $\P$ be a probability measure on $\RR^{[0,1]}$ with marginals $\bigl(\mu_{t}\bigr)_{t\in I}$. If there is a monotone Borel set $M$ with $\P(M)=1$, then $\P=\pi^{*}$.
\end{lemma}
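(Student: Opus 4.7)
The plan is to handle the $\R^n$ case by identifying the joint distribution function of $\P$ with the Fr\'echet--Hoeffding upper bound $H(x_1,\ldots,x_n)=\min_i F_i(x_i)$, where $F_i$ denotes the c.d.f.\ of $\mu_i$. This $H$ is precisely the c.d.f.\ of $\pi^*_n$, and since a Borel probability measure on $\R^n$ is determined by its joint c.d.f., the equality $\P=\pi^*_n$ will follow. The $C[0,1]$ statement is then deduced by passing to finite-dimensional projections.

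The central geometric step is a pairwise nesting lemma on the monotone set $M$: for every pair $i\neq j$ and every $x\in\R^n$, either $M\cap\{p_i\le x_i\}\subseteq\{p_j\le x_j\}$ or $M\cap\{p_j\le x_j\}\subseteq\{p_i\le x_i\}$. Indeed, were there $f\in M$ with $p_i(f)\le x_i$ and $p_j(f)>x_j$, and $g\in M$ with $p_j(g)\le x_j$ and $p_i(g)>x_i$, then $f\not\le g$ (since $p_j(f)>x_j\ge p_j(g)$) and $g\not\le f$ (since $p_i(g)>x_i\ge p_i(f)$), contradicting the total order on $M$. Using $\P(M)=1$ and $\P(\{p_i\le x_i\})=F_i(x_i)$, and ordering indices so that $F_{i_1}(x_{i_1})\le\cdots\le F_{i_n}(x_{i_n})$, iteration of the nesting property yields $\P\bigl(\bigcap_k\{p_k\le x_k\}\bigr)=F_{i_1}(x_{i_1})=\min_k F_k(x_k)$, i.e.\ the c.d.f.\ of $\pi^*_n$.

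For the $C[0,1]$ statement I pass to finite-dimensional distributions. For any $0\le t_1<\cdots<t_n\le 1$, the image $\{(f(t_1),\ldots,f(t_n)):f\in M\}$ is a monotone subset of $\R^n$, since $f\le g$ pointwise on $[0,1]$ forces the coordinate-wise inequality at the $t_k$. The pushforward $(p_{t_1},\ldots,p_{t_n})_*\P$ has the prescribed marginals $\mu_{t_k}$ and is supported on this monotone set, so by the $\R^n$ case it coincides with the corresponding finite-dimensional distribution of $\pi^*$. Since $C[0,1]$ is Polish and its Borel $\sigma$-algebra is generated by the cylinder sets, a Borel probability measure is determined by its finite-dimensional distributions, and therefore $\P=\pi^*$. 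I expect the only non-routine step to be the pairwise nesting lemma; once it is in place, the Fr\'echet--Hoeffding identity in $\R^n$ and the standard uniqueness of Borel measures on Polish spaces via finite-dimensional distributions complete the argument.
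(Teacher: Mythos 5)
Your argument is correct and follows the same overall strategy as the paper: reduce the $C[0,1]$ case to $\R^n$ via finite-dimensional distributions, then show that the joint c.d.f.\ of $\P$ on rectangles $\prod_i(-\infty,a_i]$ agrees with that of $\pi^*_n$, namely $\min_i F_i(a_i)$ (the paper phrases this via $z=\sup\{x:q_i(x)\le a_i\ \forall i\}$, which is the same quantity expressed through the quantile functions). Where you genuinely diverge is in the combinatorial step extracting the Fr\'echet--Hoeffding identity from monotonicity. You isolate a \emph{pairwise nesting} property --- for each pair $i\ne j$ one of $M\cap\{p_i\le a_i\}\subseteq\{p_j\le a_j\}$ or the reverse inclusion holds --- so that these $n$ sets form a chain under inclusion, whose minimal element (of mass $\min_i F_i(a_i)$) is the intersection. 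The paper instead argues by contradiction: if $\P(I)<z$ then for each $i$ there is a point $b^{(i)}\in M$ with $b^{(i)}_i\le a_i$ but $b^{(i)}_{j_i}>a_{j_i}$ for some $j_i\ne i$, and the map $i\mapsto j_i$ must contain a cycle, along which the total order on $M$ would be violated. Your nesting lemma is somewhat more transparent and avoids the (implicit) appeal to a cycle in a self-map of $\{1,\dots,n\}$; the paper's argument is terser. Two small points you should address when writing this up: (i) when ordering indices by $F_{i}(a_{i})$, the inclusion chain need not follow that ordering when ties occur, but if $F_i(a_i)\le F_j(a_j)$ while $M\cap\{p_j\le a_j\}\subseteq M\cap\{p_i\le a_i\}$, comparing masses forces $F_i(a_i)=F_j(a_j)$, so the conclusion $\P(\bigcap_k\{p_k\le a_k\})=\min_k F_k(a_k)$ still holds; (ii) the image of $M$ under $(p_{t_1},\dots,p_{t_n})$ is analytic rather than Borel, but it is universally measurable and carries full pushforward mass, which is all your nesting argument uses --- the paper's reduction elides the same technicality.
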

\begin{proof}
The second part is a simple consequence of the first one since the distribution of a continuous time process is determined by its finite dimensional marginal distributions.  Hence, let $\P$ be as in the first statement. For arbitrary points $a_{1}, \dots, a_{n} \in \RR$, we   show that for $I = (-\infty, a_{1}] \times \dots \times (-\infty, a_{n}]$ we have 
$ \P (I) = \pi^{*}_{n}(I)$. Set $ z = \sup \{ x: q_{i}(x)  \leq a_{i} ~~\text{ for  } i = 1, \dots, n  \}$. 
Then we have $\pi^{*}_{n}(I) = z$, and for at least one $i_{0}$ we have $\mu_{i_{0}} \bigl( (-\infty, a_{i_{0}}] \bigr) = z$.  We can hence conclude that $\gamma (I) \leq z$. And, in fact, equality must hold. For observe that from the definition of $z$ we have  $\mu_{i} \bigl( (-\infty, a_{i}] \bigr) \geq z$ for all $i= 1, \dots, n$. Hence $\gamma (I) < z$ would imply that for each $i$ there is an element $(b^{(i)}_{1}, \dots, b^{(i)}_{n})  \in \Gamma$ such that $b^{(i)}_{i}Ê\leq a_{i}$, and  $b^{(i)}_{j_{i}} > a_{j_{i}}$ for some $j_{i} \neq i$. This contradicts the monotonicity of $M$.
\end{proof}

\begin{proof}[Proof of Theorem \ref{Folk-Theorem}]
The set  $\Pi (\mu_{1} \dots, \mu_{n})$ is weakly compact. Due to the assumptions on first moments and $h$-moments of the marginal measures $\mu_{i}$,  the operator to be minimized is lower semi-continuous and bounded. Hence there is a finite minimizer. 
Strict concavity of $h$ and the above outlined application of  the monotonicity principle yield that each finite minimizer  must be concentrated on a finitely minimal, hence monotone set. By the preceding  lemma, each minimizer must be equal to $\pi^{*}_{n}$. 
\end{proof}

Now we turn to proving  Theorem \ref{Pass-Theorem}:  
we will solve a problem for a countable index set as an intermediate step, where we also add monotonicity and boundedness (from above) to the assumptions on $h$. We then use the intermediate result in  the proof of Theorem \ref{Pass-Theorem} at the end of this section. Writing $Q =  [0,1] \cap \QQ$, we define $\Pi_{Q}(\mu_{q})$ as  the set of probability measures on $\RR^{Q}$ with marginals $(\mu_{q})_{q\in Q}$. Furthermore, we fix a sequence of finite partitions  $( \cP_{n} ) $ of $[0,1]$ with  $\cP_{n}\subseteq \cP_{n+1} \subseteq Q$ and  $\bigcup_{n} \cP_{n} = Q$. We then replace the  original problem  \eqref{Pass}   by 
\begin{align}\label{PassAux}\tag{B'}
\inf_{\P \in \Pi_{Q} (\mu_{q})} \int h \Bigl( \limsup_{n\rightarrow \infty} \sum_{t_{i}\in \cP_{n}} f_{t_{i}} (t_{i}-t_{i-1}) \Bigr) \, d\P(f).
\end{align}

Writing $\pi^{*}_{Q}$ for the $Q$-analogue of $\pi^*$, we claim: 

\begin{pro}\label{Pass-Q-Theorem}
Let $h: \RR \rightarrow \RR_{\leq0}$ be  non-positive, concave and increasing. Provided that $\int |x| \, d\mu_{q}(x) < \infty, ~\int |h| \, d \mu_{q} < \infty$ for all $q\in Q$,  the measure $\pi^{*}_{Q}$ is a minimizer of Problem {\eqref{PassAux}}.
\end{pro}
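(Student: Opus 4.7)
The strategy is to compare any $\P\in\Pi_Q(\mu_q)$ with $\pi^*_Q$ at the level of each partition $\mathcal{P}_n$, where the problem reduces to a finite-dimensional transport problem handled by (a variant of) Theorem \ref{Carlier-Theorem}, and then to pass to the limit in $n$ using semi-continuity. For each fixed $n$ the integrand $f\mapsto h(S_n(f))$ depends on $f$ only through the finitely many coordinates $(f_{t_i})_{t_i\in\mathcal{P}_n}$, so $\int h(S_n(f))\,d\P(f)$ depends on $\P$ only through its projection to $\R^{\mathcal{P}_n}$, which lies in $\Pi(\mu_{t_i})_{t_i\in\mathcal{P}_n}$. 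Concavity of $h$ (together with linearity of $S_n$) is enough for the pairwise rearrangement / Karamata majorization argument underlying Theorem \ref{Carlier-Theorem} to show that the comonotone coupling of the marginals $(\mu_{t_i})_{t_i\in\mathcal{P}_n}$ is a minimizer, although no longer necessarily unique. Since this coupling is precisely the projection of $\pi^*_Q$ to $\mathcal{P}_n$, one obtains that for every $\P\in\Pi_Q(\mu_q)$ and every $n$,
\beo
\int h(S_n(f))\,d\P(f)\;\geq\;\int h(S_n(f))\,d\pi^*_Q(f).
\eeo

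Applying the reverse Fatou lemma on the left-hand side (valid because $h\leq 0$) and combining with the displayed inequality yields
\beo
\int h\bigl(\limsup_{n}S_n(f)\bigr)\,d\P \;\geq\; \limsup_{n}\int h(S_n(f))\,d\P \;\geq\; \limsup_{n}\int h(S_n(f))\,d\pi^*_Q.
\eeo
To finish I would like to show that the right-hand $\limsup$ equals $\int h(\limsup_n S_n(f))\,d\pi^*_Q$. Under $\pi^*_Q$ the path $f$ has law $X\mapsto (q_t(X))_{t\in Q}$ with $X$ uniform on $(0,1)$; weak continuity of $(\mu_t)_{t\in[0,1]}$ implies that for a.e. $X$ the map $t\mapsto q_t(X)$ is continuous on $[0,1]$ (at continuity points of all the $q_\cdot$), hence Riemann-integrable, so $S_n(f)\to\int_0^1 q_t(X)\,dt$ and by continuity of $h$ also $h(S_n(f))\to h(\int_0^1 q_t(X)\,dt)$ $\pi^*_Q$-a.s. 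A dominated convergence argument then gives $\lim_n\int h(S_n(f))\,d\pi^*_Q = \int h(\lim_n S_n(f))\,d\pi^*_Q = \int h(\limsup_n S_n(f))\,d\pi^*_Q$, and the chain above delivers the desired inequality $\int h(\limsup_n S_n)\,d\P\geq\int h(\limsup_n S_n)\,d\pi^*_Q$.

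The main obstacle is the dominated-convergence step, which asks for an $n$-independent, $\pi^*_Q$-integrable bound on $|h(S_n(f))|$. The natural candidate comes from Jensen's inequality: since $-h$ is convex,
\beo
|h(S_n(q^{(X)}))|\;=\;-h\!\Bigl(\sum_{t_i\in\mathcal{P}_n} w_i\, q_{t_i}(X)\Bigr)\;\leq\; \sum_{t_i\in\mathcal{P}_n} w_i\,|h(q_{t_i}(X))|,
\eeo
and for a.e.\ $X$ the right-hand side is bounded by $\sup_{t\in[0,1]}|h(q_t(X))|$ (continuous supremum over a compact set). The delicate point is to control this supremum in $L^1(dX)$ using only the pointwise hypothesis $\int|h|\,d\mu_q<\infty$ for each $q\in Q$; here I expect the monotonicity and non-positivity of $h$ to enter decisively, as they force $|h|$ to be decreasing and thereby reduce control of the supremum to control near the minimum of $t\mapsto q_t(X)$, which by weak continuity and compactness of $[0,1]$ is attained at some rational point where the assumption applies.
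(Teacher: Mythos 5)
Your strategy is a genuine alternative to the paper's: you compare $\P$ and $\pi^*_Q$ through the finite Riemann sums $S_n$ directly (a finite-dimensional transport comparison, essentially a Carlier/comonotone argument without uniqueness), and then try to pass to the limit. The paper instead works with $\varphi_n(f)=\sup_{k\geq n}s_k(f)$ rather than with $s_n$ itself, and proves that $\pi^*_Q$ minimizes $\Phi_n\colon\P\mapsto\int h\circ\varphi_n\,d\P$ (Lemma~\ref{Optimality Lemma}). That choice is not cosmetic: $\varphi_n$ decreases monotonically to $\varphi=\limsup_n s_n$, and since $h$ is increasing and non-positive, $h\circ\varphi_n$ is a decreasing sequence bounded above by $0$. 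Monotone convergence therefore gives $\int h\circ\varphi_n\,d\P\to\int h\circ\varphi\,d\P$ for \emph{every} $\P$, including $\pi^*_Q$, with no integrable majorant required. The price paid in the paper is that establishing $\pi^*_Q$'s optimality for $\Phi_n$ is much harder than for $S_n$: Lemma~\ref{Optimality Lemma} runs an iterated lexicographic-minimization argument over the compact sets $K_0\supseteq K_1\supseteq\cdots$, invokes the variational principle at each stage, and separately handles merely concave $h$ by inserting strictly concave $h_k$.

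The gap in your version is precisely the dominated convergence step you flagged, and it is a real gap, not just a delicate point. You need an $n$-independent $L^1(\pi^*_Q)$-majorant for $|h\circ s_n|$, and the Jensen bound reduces this to integrability of $X\mapsto\sup_{t\in[0,1]}|h(q_t(X))|$, equivalently (since $-h$ is decreasing) of $X\mapsto -h\bigl(\min_{t\in[0,1]}q_t(X)\bigr)$. The hypothesis only gives $\int -h(q_q(X))\,dX<\infty$ for each \emph{fixed} $q\in Q$; the minimum over $t$ is attained at a point $t^*(X)$ that varies with $X$, and weak continuity plus compactness of $[0,1]$ give pointwise finiteness of $\min_t q_t(X)$ but give no uniform control that would transfer integrability to the envelope. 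Your suggestion that the minimum ``is attained at some rational point where the assumption applies'' does not rescue this: the rational minimizing point depends on $X$, and integrability at each fixed rational $q$ does not imply integrability of $q\mapsto q_{t^*(q)}$ along a random selection. Indeed, there is no monotone or convexity structure forcing the Riemann sums $s_n(q_\cdot(X))$ to behave monotonically in $n$, so on the $\pi^*_Q$ side neither monotone convergence nor Fatou in the useful direction applies, and dominated convergence is the only route for your version of the argument — which is exactly what the paper's replacement of $s_n$ by $\varphi_n$ is designed to sidestep. To make your approach work you would either have to impose an additional uniform integrability hypothesis (which the paper is explicitly trying to avoid, cf.\ the discussion of Pass' assumptions) or reorganize the passage to the limit along the lines of the paper.

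Two smaller remarks. First, your reduction of the finite-$n$ comparison to a comonotone/Carlier statement for merely concave $h$ is fine (submodularity of $x\mapsto h(x_1+\cdots+x_n)$), but that is only the $S_n$-version, not the paper's $\Phi_n$-version, so it does not substitute for Lemma~\ref{Optimality Lemma}. Second, your chain of inequalities is stated correctly and the identification $\limsup_n h(s_n)=h(\limsup_n s_n)$ is legitimate for continuous increasing $h$; the whole argument stands or falls on the single domination step discussed above.
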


The proof is preceded by Lemmas \ref{Compactness Lemma}, \ref{Continuity Lemma}, and \ref{Optimality Lemma}. The assumptions here on $h$ and the marginals are  as in Proposition  \ref{Pass-Q-Theorem}.
\begin{lemma}\label{Compactness Lemma}
$\Pi_{Q} \bigl(\mu_{q} \bigr)$ is weakly compact.
\end{lemma}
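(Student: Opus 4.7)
The plan is to apply Prokhorov's theorem on the product space $\RR^{Q}$. Since $Q$ is countable, $\RR^{Q}$ endowed with the product topology is Polish, so a subset of probability measures is weakly relatively compact iff it is tight; weak compactness then follows by combining tightness with closedness of $\Pi_{Q}(\mu_{q})$ under weak limits.

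For tightness, enumerate $Q = \{q_{1}, q_{2}, \ldots\}$ and fix $\varepsilon > 0$. Each $\mu_{q_{n}}$ is a Radon measure on $\RR$, so I can choose a compact $K_{n} \subset \RR$ with $\mu_{q_{n}}(\RR \setminus K_{n}) < \varepsilon\, 2^{-n}$. By Tychonoff's theorem the product $K := \prod_{n} K_{n}$ is compact in $\RR^{Q}$, and since $\RR^{Q} \setminus K \subseteq \bigcup_{n} p_{q_{n}}^{-1}(\RR \setminus K_{n})$, the marginal condition yields
\[
\pi(\RR^{Q} \setminus K) \leq \sum_{n} \mu_{q_{n}}(\RR \setminus K_{n}) < \varepsilon
\]
uniformly over $\pi \in \Pi_{Q}(\mu_{q})$.

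Closedness under weak limits is immediate from continuity of the coordinate projections: if $\pi_{k} \to \pi$ weakly and each $\pi_{k}\in \Pi_{Q}(\mu_{q})$, then for each $q \in Q$ and each $\phi \in C_{b}(\RR)$ the composition $\phi \circ p_{q}$ lies in $C_{b}(\RR^{Q})$, so
\[
\int \phi \, d\mu_{q} \;=\; \int \phi \circ p_{q} \, d\pi_{k} \;\longrightarrow\; \int \phi \circ p_{q} \, d\pi,
\]
which forces the $q$-marginal of $\pi$ to coincide with $\mu_{q}$.

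I do not anticipate a genuine obstacle: the countability of $Q$ is what makes everything work -- it keeps $\RR^{Q}$ Polish, it lets the diagonal tightness sum converge, and it allows the Tychonoff product of the $K_{n}$ to remain compact. No truncation to finitely many coordinates or subsequence extraction is needed.
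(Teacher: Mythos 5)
Your proof is correct and follows essentially the same route as the paper: tightness via Prokhorov's theorem, exhibiting the compact product $K = \prod_{n} K_{n}$ with $\mu_{q_{n}}(\RR\setminus K_{n}) < \varepsilon 2^{-n}$. The only difference is cosmetic --- you bound $\pi(\RR^{Q}\setminus K)$ directly by countable subadditivity while the paper passes to the limit of finite cylinder sets --- and you also spell out the (routine) closedness of $\Pi_{Q}(\mu_{q})$, which the paper leaves implicit.
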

\begin{proof} By Prokhorov's theorem: let $\varepsilon > 0 $ be arbitrary. Then, with $Q = \{ q_{1},  q_{2}, \dots \}$,  for each $q_{k}$ there exists a compact set $K_{k}\subseteq \RR$ with $\mu_{q_{k}}(K_{k}) > 1- \frac{\varepsilon}{2^k}$. The set $K = \Pi_{k=1}^{\infty} K_{k}$ is a compact subset of $\RR^{Q}$. For a measure $\P \in \Pi_{Q} \bigl(\mu_{q} \bigr)$ we have 
$$
\P (K) = \lim_{n \rightarrow \infty } \P \bigl( p_{q_{1}, q_{2}, \dots, q_{n}}^{-1} (K_{1} \times K_{2} \times \dots \times K_{n} ) \bigr).
$$
As for each $n$
$$
\P \bigl( p_{q_{1}, q_{2}, \dots, q_{n}}^{-1} (K_{1} \times K_{2} \times \dots \times K_{n} ) \bigr) > 1 - \sum_{k=1}^n \frac{\varepsilon}{2^k} \geq 1- \varepsilon
$$
we have $\P (K) \geq 1-\varepsilon$, and Prokhorov's theorem can be applied. \end{proof}
We introduce some notation: 

\begin{align*}
s_{n}\ &: \RR^{Q}  \rightarrow \RR, \ & & f \mapsto \sum_{ t_{i} \in \cP_{n} } f_{t_{i}} (t_{i}-t_{i-1}),\\
s_{n}^{(h)} \ &: \RR^{Q}   \rightarrow \RR, ~~~ \ & & f \mapsto \sum_{t_{i}\in \cP_{n}} h(f_{t_{i}}) (t_{i}-t_{i-1}),\\ 
\varphi_{n}\ &: \RR^{ Q }  \rightarrow \RR \cup \{\infty \}, ~~~ \ & & f \mapsto \sup_{k\geq n} s_{k}(f), \\
\varphi\ &: \RR^{Q} \rightarrow \RR \cup \{ -\infty, \infty \}, ~~~\ & & f \mapsto \inf_{n} \varphi_{n}(f) = \limsup_{n} s_{n}(f).
\end{align*}

We continue with 
\begin{lemma}\label{Continuity Lemma}
For each $n$, the operators defined on $\Pi_{Q}(\mu_{q})$,
\beo
S_{n}: \P \mapsto \int h \circ s_{n} \, d\P
\eeo
and 
\beo
\Phi_{n}: \P \mapsto \int h \circ \varphi_{n} \, d\P
\eeo are lower-semi-continuous (w.r.t. weak convergence) and have minimizers. The values of the minima are finite.
\end{lemma}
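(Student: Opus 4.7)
The key structural fact I would exploit is a Jensen-type minorant of the integrand whose $\P$-integral is constant on the admissible set $\Pi_{Q}(\mu_{q})$. Since the weights $t_{i}-t_{i-1}$ in $s_{n}$ sum to one over the partition $\cP_{n}$, concavity of $h$ gives
\beo
h\circ s_{n}\ \geq\ s_{n}^{(h)} \quad\text{pointwise,}
\eeo
while the marginal constraint together with $\int|h|\,d\mu_{q}<\infty$ yields
\beo
\textstyle \int s_{n}^{(h)}\,d\P \;=\; \sum_{t_{i}\in\cP_{n}}(t_{i}-t_{i-1})\int h\,d\mu_{t_{i}} \;=:\; K_{n},
\eeo
a finite constant independent of $\P\in\Pi_{Q}(\mu_{q})$. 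This makes $s_{n}^{(h)}$ invisible to weak convergence on $\Pi_{Q}(\mu_{q})$ and reduces each lower-semicontinuity claim to the Portmanteau inequality for a non-negative integrand.

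For $S_{n}$, I would note that $s_{n}$ depends on only finitely many coordinates and is affine, hence continuous on $\RR^{Q}$, so $h\circ s_{n}$ is continuous. Decomposing $h\circ s_{n}=(h\circ s_{n}-s_{n}^{(h)})+s_{n}^{(h)}$, the first summand is non-negative and continuous, so the Portmanteau theorem (in its form for non-negative l.s.c.\ functions) gives
\beo
\textstyle \liminf_{m}\int(h\circ s_{n}-s_{n}^{(h)})\,d\P_{m} \;\geq\; \int(h\circ s_{n}-s_{n}^{(h)})\,d\P
\eeo
whenever $\P_{m}\to\P$ in $\Pi_{Q}(\mu_{q})$, while the second summand contributes exactly $K_{n}$ on both sides; adding yields the lower-semicontinuity of $S_{n}$. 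For $\Phi_{n}$, I would first argue that $\varphi_{n}=\sup_{k\geq n}s_{k}$ is l.s.c.\ as a countable supremum of continuous functions, and since $h$ is continuous and non-decreasing (extend by $h(+\infty):=\sup h\leq 0$), $h\circ\varphi_{n}$ is l.s.c. Monotonicity of $h$ together with $\varphi_{n}\geq s_{n}$ yields the same minorization $h\circ\varphi_{n}\geq h\circ s_{n}\geq s_{n}^{(h)}$, so the analogous decomposition $h\circ\varphi_{n}=(h\circ\varphi_{n}-s_{n}^{(h)})+s_{n}^{(h)}$ now splits the integrand into a non-negative \emph{l.s.c.}\ piece (sum of the l.s.c.\ $h\circ\varphi_{n}$ and the continuous $-s_{n}^{(h)}$) and a constant-expectation piece, and Portmanteau applies once more.

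Existence of minimizers is immediate from Lemma \ref{Compactness Lemma}: $\Pi_{Q}(\mu_{q})$ is weakly compact and a l.s.c.\ function on a compact set attains its infimum. Since $h\leq 0$ and $h\circ s_{n},\,h\circ\varphi_{n}\geq s_{n}^{(h)}$ pointwise, both $S_{n}$ and $\Phi_{n}$ take values in the finite interval $[K_{n},0]$, so the minima are finite. The main obstacle I anticipate is the potential unboundedness from below of $h\circ s_{n}$ and $h\circ\varphi_{n}$ (together with the possibility that $\varphi_{n}$ attains $+\infty$), which would preclude a direct application of Portmanteau for bounded continuous integrands; the constant-integral Jensen minorant $s_{n}^{(h)}$ is precisely the tool that converts the problem into one about a non-negative l.s.c.\ integrand where the inequality $\liminf_{m}\int F\,d\P_{m}\geq\int F\,d\P$ is standard.
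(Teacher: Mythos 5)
Your proof is correct and follows essentially the same route as the paper: both rely on the Jensen-type minorant $h\circ\varphi_n\geq h\circ s_n\geq s_n^{(h)}$ and the fact that $\int s_n^{(h)}\,d\P$ is a constant $K_n$ independent of $\P\in\Pi_Q(\mu_q)$, and then invoke compactness for existence and the two-sided bound for finiteness. The only difference is presentational: the paper cites Villani's Lemma 4.3 (lower-semicontinuity of a l.s.c.\ cost minorized by a continuous function whose integrals converge along the sequence), whereas you unpack that lemma directly via the decomposition $h\circ\varphi_n=(h\circ\varphi_n - s_n^{(h)})+s_n^{(h)}$ and the Portmanteau inequality for non-negative l.s.c.\ integrands; you also spell out explicitly why $h\circ\varphi_n$ is l.s.c.\ (countable supremum of continuous functions composed with a continuous non-decreasing $h$ extended to $+\infty$), a point the paper leaves implicit when checking the hypotheses of Villani's lemma.
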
 
\begin{proof}
The existence of minimizers will follow from lower-semi-continuity of the operators and compactness of $\Pi_{Q}(\mu_{q})$. Hence, let $(\P_{l})_{l\in \NN}$ be a sequence in $\Pi_{Q}(\mu_{q})$ converging weakly to some $\P_{0}$. \\
We have
\beo
\varphi_{n} \geq s_{n}
\eeo
and hence, by monotonicity and concavity of $h$ that 
\beo
h \circ \varphi_{n} \geq h \circ s_{n} \geq s_{n}^{(h)}.
\eeo
For each  $\P\in \Pi_{Q}\bigl(\mu_{q}\bigr)$, 
\beo
\int s_{n}^{(h)} \, d\P = \sum_{t_{i}\in \cP_{n}} (t_{i}-t_{i-1}) \int h (f_{t_{i}}) \, d\P(f) = \sum_{t_{i}\in \cP_{n}} (t_{i}-t_{i-1}) \int h \, d\mu_{t_{i}}.
\eeo
Hence in particular 
\beo
\lim_{l \rightarrow \infty} \int s_{n}^{(h)} \, d\P_{l} = \int s_{n}^{(h)} \, d\P_{0}.
\eeo
As $s_{n}^{(h)}$ is continuous,  the prerequisites of Lemma 4.3 in \cite{Vi09} are met for both $S_{n}$ and $\Phi_{n}$, and applying that result we get 
\beo
\liminf_{l\rightarrow \infty} S_{n}(\P_{l}) \geq S_{n}(\P_{0})
\eeo
and
\beo
\liminf_{l\rightarrow \infty} \Phi_{n}(\P_{l}) \geq \Phi_{n}(\P_{0}).
\eeo
Finally, the finiteness of the minimal values follows  from $h$ being bounded from above, the assumption on finite $h$-moments of the marginals, and \\ $h \circ \varphi_{n} \geq h \circ s_{n} \geq s_{n}^{(h)}$.
 \end{proof}

\begin{lemma}\label{Optimality Lemma}
For each $n\in \NN$, the measure $\pi^{*}_{Q}$  minimizes $\Phi_{n}$ on $\Pi_{Q}\bigl(\mu_{q}\bigr)$. 
\end{lemma}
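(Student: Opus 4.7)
My plan is to mimic the strategy used for Theorem~\ref{Carlier-Theorem}: apply the variational principle (Theorem~\ref{MainTheorem}) to a minimizer of $\Phi_n$ in order to obtain a $c$-monotone set, and then upgrade this to a \emph{pointwise monotone} set so that Lemma~\ref{Uniqueness-Lemma} forces the minimizer to equal $\pi^*_Q$. First, by Lemma~\ref{Compactness Lemma} and Lemma~\ref{Continuity Lemma}, $\Phi_n$ attains its minimum at some $\P^*\in \Pi_Q(\mu_q)$. The constraint set is encoded by the countable family $\cF=\{\phi\circ p_q - \int\phi\,d\mu_q : q\in Q,\ \phi\in \cD\}$, where $\cD$ is a countable dense subset of $C_b(\R)$, together with the cost $c=h\circ \varphi_n$. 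Assumption~\ref{BoundedAssumption} is met using a dominating function linear in the coordinates (finite on $\P^*$ by the first-moment hypothesis on the $\mu_q$ and the bound on $h$). Theorem~\ref{MainTheorem} then yields a $c$-monotone set $\Gamma\subseteq \R^Q$ with $\P^*(\Gamma)=1$.

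Next I would show that $\Gamma$ must essentially be a monotone set in $\R^Q$. The swap I have in mind is the classical one: for $f,g\in \Gamma$ the measure $\alpha'=\tfrac12(\delta_{f\vee g}+\delta_{f\wedge g})$ has the same $Q$-marginals as $\alpha=\tfrac12(\delta_f+\delta_g)$ because $\{f(q),g(q)\}=\{(f\vee g)(q),(f\wedge g)(q)\}$ as multisets for every $q$. Using linearity of each $s_k$ one has $s_k(f\vee g)+s_k(f\wedge g)=s_k(f)+s_k(g)$, and since the weights $(t_i-t_{i-1})$ are positive we get $\varphi_n(f\vee g)\ge \varphi_n(f)\vee \varphi_n(g)$ and $\varphi_n(f\wedge g)\le \varphi_n(f)\wedge \varphi_n(g)$. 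Combined with concavity and monotonicity of $h$, this should force $\int c\,d\alpha' \le \int c\,d\alpha$, with strict inequality when $f$ and $g$ are incomparable. Because $h$ is only assumed concave (not strictly so), I would run this argument after perturbing $h$ to a strictly concave $h_\varepsilon$ with $h_\varepsilon\nearrow h$; for each $\varepsilon$ the strict-concavity argument would produce a minimizer concentrated on a pointwise monotone set, and passing to the limit using the lower semi-continuity of the $\Phi_n^\varepsilon$ together with the integrability hypothesis on $|h|$ and the $\mu_q$ would deliver $\pi^*_Q$ as a minimizer of $\Phi_n$.

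The final step is an application of Lemma~\ref{Uniqueness-Lemma}: any probability measure on $\R^Q$ concentrated on a monotone Borel set and having marginals $(\mu_q)_{q\in Q}$ must coincide with $\pi^*_Q$. Together with the previous step this identifies $\P^*$ with $\pi^*_Q$ (up to the perturbation limit), proving the lemma.

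The step I expect to be the main obstacle is the symmetrization. Unlike in the Carlier setting, where the cost depends on a single linear functional and the exchange $(f,g)\rightsquigarrow(f\vee g, f\wedge g)$ spreads the relevant values in a transparent way, here the cost involves the supremum $\varphi_n=\sup_{k\ge n}s_k$, and the equality $s_k(f\vee g)+s_k(f\wedge g)=s_k(f)+s_k(g)$ is valid only \emph{termwise}; whether it survives after taking $\sup_k$ on both sides is delicate and potentially sensitive to how the partitions $(\cP_k)$ interact with the points where $f$ and $g$ disagree. Making the strict-improvement claim rigorous --- and verifying that the approximation by $h_\varepsilon$ passes cleanly to the limit under the available integrability assumptions --- is where the real work of the proof lies.
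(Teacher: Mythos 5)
Your broad strategy---apply the variational principle to a minimizer of $\Phi_n$, upgrade a $c$-monotone support to a pointwise monotone set, then invoke Lemma~\ref{Uniqueness-Lemma}---matches the paper's outline, and you correctly flag the place where the argument is in danger: the interaction of the swap $(f,g)\rightsquigarrow (f\vee g,f\wedge g)$ with the supremum in $\varphi_n=\sup_{k\ge n}s_k$. But the obstacle you flag is fatal to your plan as written, and your proposed workaround (perturbing $h$ to strictly concave $h_\varepsilon$) does not address it. The bounds $\varphi_n(f\vee g)\ge \varphi_n(f)\vee\varphi_n(g)$ and $\varphi_n(f\wedge g)\le \varphi_n(f)\wedge\varphi_n(g)$ do \emph{not} give $\varphi_n(f\vee g)+\varphi_n(f\wedge g)=\varphi_n(f)+\varphi_n(g)$ (the sups over $k$ are achieved at different indices), so concavity of $h$---strict or not---does not force $\int c\,d\alpha'\le\int c\,d\alpha$. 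The paper's footnote to this very proof makes the point explicitly: local optimality for the single problem of minimizing $\Phi_n$ (your ``problem (0)'') need not imply monotonicity of $\Gamma$; one only gets the weaker condition that $\varphi_n(f)>\varphi_n(g)$ forces $\varphi_n((f-g)^+)=0$.

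The key idea the proposal is missing is the paper's \emph{lexicographic hierarchy}. Instead of a single minimizer of $\Phi_n$, the paper builds, via nested compactness, a measure $\pi_0$ that minimizes $\Phi_n$, then minimizes $S_1=\int h\circ s_1\,d\P$ among $\Phi_n$-minimizers, then $S_2$ among those, and so on. The swap used for step $k$ modifies $f$ and $g$ only on the finite set $\cP_k$: this leaves the $Q$-marginals and (for $j<k$) the values $s_j$ unchanged, and leaves $\varphi_n$ unchanged as well, while strictly improving $S_k$ when $f,g$ are not ordered on $\cP_k$. Thus the competitor is legitimate at every level of the hierarchy, strict concavity bites where it can, and one obtains monotonicity on each $\cP_k$, hence on $Q=\bigcup_k\cP_k$. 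Finally, for merely concave $h$, the paper does not approximate $h$ inside $\Phi_n$ (your $h_\varepsilon\nearrow h$ limiting argument), but rather replaces $h$ only in the \emph{secondary} objectives by strictly concave $h_k$ with $\int|h_k|\,d\mu_q<\infty$ on $\cP_k$; $\Phi_n$ itself keeps the original $h$, so no limit passage in $\Phi_n$ is needed. These two devices---finite-point swaps within a lexicographic scheme, and localizing the strict-concavity perturbation to the auxiliary costs---are the substance of the proof, and neither appears in your proposal.
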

\begin{proof}
We  first show that, when $h$ is strictly concave, the following stronger assertion is true:  $\pi^{*}_{Q}$ is the unique measure in $ \Pi_{Q}\bigl( \mu_{q} \bigr) $ doing the following:

\medskip

\noindent \hspace{2mm}(0) it minimizes $\Phi_{n}$, 

\noindent\hspace{2mm}(1) among the minimizers of $\Phi_{n}$ it minimizes $S_{1}$,

\noindent\hspace{2mm}(2) among the measures fulfilling (0) and (1), it minimizes $S_{2}$,

$\vdots$ 

\noindent\hspace{2mm}$(k)$ among the measures fulfilling (0), (1), $\dots, (k-1)$, it minimizes $S_{k}$

$\vdots$  
\medskip

We  show existence of a measure fulfilling all the conditions $(0), (1), \dots$:
write $K_{0}$ for the set of minimizers of $\Phi_{n}$. By the previous lemma, $K_{0}  \neq \emptyset$. Also, $K_{0}$ is compact: for it is a closed subset of the compact set $\Pi_{Q}(\mu_{q})$, where closedness is due to the semi-continuity of $\Phi_{n}$. Hence, among the minimizers of $\Phi_{n}$, there is a minimizer of the lower-semi-continuous operator $S_{1}$. Writing $K_{1}$ for the set of  these minimizers, by the same argument as above, $K_{1}$ is nonempty and compact. Hence, the set $K_{2}$ of minimizers of $S_{2}$ on $K_{1}$ is nonempty and again compact. By induction we obtain a decreasing sequence of compact nonempty sets $K_{k}$. Hence the set $K=\bigcap_{k} K_{k}$ is nonempty and each of its elements fulfills properties $(0), (1), \dots$ Pick such  an element and denote it by $\pi_{0}$. We now apply the monotonicity principle to show that $\pi_{0}$ must indeed  be equal to $\pi^{*}_{Q}$:
$\pi_{0}$ is concentrated on a set $\Gamma$ that is finitely optimal for each of the problems $(k)$. Observe first that finite optimality of  $\Gamma$ for problem (0) alone  does not need to imply that $\Gamma$ is monotone.\footnote{What finite optimality does imply is the following: if $f,g$ are in $\Gamma$, and $\varphi_{n}(f) > \varphi_{n}(g)$, then one must have $\varphi_{n}\bigl( (f-g)^+\bigr) = 0$. This is a weaker condition than $\leq$ being an order on $\Gamma$, and explains  why one works with the sequence of problems $(k)$ rather than just with problem (0). } 
However, finite optimality of  $\Gamma$ for problem (1) - i.e.\ the optimization of $S_{1}$ on the set $K_{0}$ -  does imply that $\Gamma$ must be monotone on $\cP_{1}$, that is, if $f,g\in \Gamma$, then either $f|_{\cP_{1}} \leq g|_{\cP_{1}}$ or $f|_{\cP_{1}}\geq g|_{\cP_{1}}$. For if there were $f,g$ not ordered on $\cP_{1}$, then write $f' = \indy_{\cP_{1}} \max(f,g) + \indy_{\cP_{1}^c} f$ and $g'= \indy_{\cP_{1}}\min(f,g) + \indy_{\cP_{1}^c}g$. Set $\alpha = \frac{1}{2} \delta_{f} + \frac{1}{2} \delta_{g}$ and $\alpha' = \frac{1}{2} \delta_{f'} + \frac{1}{2} \delta_{g'}$, where $\delta_{f}$ denotes the Dirac-measure on $f$, etc. Then apparently $S_{1} (\alpha') < S_{1} (\alpha) $, but $\alpha'$  is also a competitor of $\alpha$: it clearly has the same marginals, and we have $\varphi_{n}(f') = \varphi_{n}(f)$ and $\varphi_{n}(g')= \varphi_{n}(g)$, as manipulating a function $f\in \RR^{Q}$ on finitely many points does not change the value of $\varphi_{n}$. Hence, also $\Phi_{n}(\alpha') = \int h\circ \varphi_{n}\, d\alpha' = \int h\circ \varphi_{n} \, d\alpha = \Phi_n (\alpha)$. The existence of an $S_{1}$-better competitor is a contradiction to finite optimality, so $\Gamma$ must indeed be monotone on $\cP_{1}$.
Now for problem (2), we also find  that $\Gamma$ must be  monotone on $\cP_{2}$: let $f,g \in \Gamma$, and assume, due to monotonicity of $\Gamma$ on $\cP_{1}$, that  $f|_{\cP_{1}}\geq g|_{\cP_{1}}$. If $f$ and $g$ were  not ordered on $\cP_{2}$, then the same construction of $f'$, $g'$,  $\alpha$ and $\alpha'$ as above (with $\cP_{2}$ in place of $\cP_{1}$) will give a contradiction to finite optimality: note that $s_{1}(f')= s_{1}(f)$ and $s_{1}(g')=s_{1}(g)$, as $f'=f$ and $g'=g$ on $\cP_{1}$. Hence, $\Phi_{n} (\alpha') = \int  h\circ \varphi_{n} \, d \alpha' = \int h\circ \varphi_{n} \, d\alpha = \Phi_{n}(\alpha)$, $S_{1}(\alpha') = \int h\circ s_{1} \, d\alpha' = \int h\circ s_{1}\, d\alpha = S_{1}(\alpha)$, and  $\alpha'$ is really a competitor of $\alpha$.\\
Iterating this argument one finds that $\Gamma$ must indeed be monotone on each $\cP_{k}$, and  henceforth monotone. 
But then $\pi_{0}$ must be $\pi^{*}_{Q}$, because $\pi^{*}_{Q}$ is the only measure in $\Pi_{Q}(\mu_{q})$ concentrated on a monotone set. This last statement follows easily from Lemma \ref{Uniqueness-Lemma}.

Finally, we discuss the case where $h$ is concave, but not necessarily strictly concave. Then, due to the finiteness of $\int |x| \, d\mu_{q}$ for all $q\in Q$, there is, for each $k\in \NN$, a strictly concave function $h_{k}$ such that $\int |h_{k}| \, d\mu_{q} < \infty $ for all $q \in \cP_{k}$. 
Then by adapting the above argument, it is easy to see that $\pi^{*}_{Q}$ is the only measure in $\Pi_{Q}\bigl( \mu_{q} \bigr)$ that

\medskip

\noindent \hspace{2mm}(0) minimizes $\Phi_{n}$

\noindent \hspace{2mm}(1') among the minimizers of $\Phi_{n}$, it minimizes $\int h_{1} (s_{1}) \, d\gamma$,

$\vdots$

\noindent \hspace{2mm}(k') among the measures fulfilling (0),  \ldots,  (k-1'), it minimizes $ \int h_{k} ( s_{k}) \, d\gamma$, 

$\vdots$

\end{proof}

\begin{proof}[Proof of Proposition \ref{Pass-Q-Theorem}]
Let $\P$ be a measure in $\Pi_{Q}(\mu_{q})$. Then for each $n$, according to the previous lemma
\beo
\int h\circ \varphi_{n} \, d\P \geq \int h \circ \varphi_{n} \, d\pi^{*}_{Q}.
\eeo

As $h$ is increasing and non-positive, and $\varphi_{n}$ decreases to $\varphi = \limsup_{n}s_{n}$, an application of monotone convergence finishes the proof. 
\end{proof}
Finally we can prove Theorem \ref{Pass-Theorem}.
\begin{proof}[Proof of Theorem \ref{Pass-Theorem}]
First, note that due to the regularity assumption  $\int_{0}^{1} \int |x| d\mu_{t} \, dt < \infty$, it is w.l.o.g to  assume that $h$ takes values in $\RR_{\leq0}$ only: otherwise, we would replace it by $h-ax-b$ for suitable values $a,b\in \RR$, and take care of the fact that the integral $\iint_0^1 f(t) dt \, d\gamma(f)$ is finite and invariant among all $\gamma \in \Pi_{\cR}( \mu_{t} ) $. This is a consequence of the said regularity assumption and Fubini's theorem applied to the function $(t,f) \mapsto f(t)$, which, on the product $[0,1] \times \cR$ is $\lambda \otimes \gamma$-a.e. equal to a measurable function. 
 
If we further assume for the time being that $h$ is increasing, we can apply Proposition \ref{Pass-Q-Theorem} to  see the optimality of $\pi^*$ as follows: 
let $p_{Q}$ be the projection $\RR^{I} \rightarrow \RR^{Q}$, and write $p$ for its restriction on $\cR [0,1]$. 
For an arbitrary $\P \in \Pi_{\cR}(\mu_{t})$, the measure $p(\P)$ is in $\Pi_{Q}(\mu_{q})$ and clearly
\beo
\int h \Bigl( \int_{0}^1 f \, dt \Bigr) \, d\P = \int h \Bigl( \limsup_{n\rightarrow \infty } \sum_{t_{i}\in \cP_{n}} f_{t_{i}}(t_{i}-t_{i-1}) \Bigr) \, d \, p(\P).
\eeo
But for the right-hand-side one also has, due to Theorem \ref{Pass-Q-Theorem},
\beo
\int h \Bigl( \limsup_{n} \sum_{t_{i}\in \cP_{n}} f_{t_{i}}(t_{i}-t_{i-1}) \Bigr) \, d \, p(\P) \geq
\int h \Bigl( \limsup_{n} \sum_{t_{i}\in \cP_{n}} f_{t_{i}}(t_{i}-t_{i-1}) \Bigr) \, d \, \pi^{*}_{Q}.
\eeo
As the right-hand-side of this equals $\int h \Bigl( \int_{0}^1 f \, dt \Bigr) \, d\pi^{*}$ we have
\beo
\int h \Bigl( \int_{0}^1 f \, dt \Bigr) \, d\P \geq \int h \Bigl( \int_{0}^1 f \, dt \Bigr) \, d\pi^{*}.
\eeo

If $h$ is not increasing, then assume first it is decreasing. If in problem {\eqref{PassAux}} we replace  $\limsup$ by $\liminf$ one can show, with the   statement and proof of Proposition \ref{Pass-Q-Theorem} and the above argument suitably adapted, that $\pi^{*}$ must be again optimal.   Finally, if $h$ is neither increasing nor decreasing, then it can still be written as a sum $h_{1}+h_{2}$, where $h_{1}$ is  concave, increasing and non-positive, and $h_{2}$ is concave, decreasing and non-positive, and again $\pi^{*}$ is an optimizer. ($h_{1}$ and $h_{2}$ will satisfy the regularity assumptions as long as $h$ does.) 

If the minimum is finite and $h$ is strictly concave, each other minimizer must be concentrated on a finitely minimal, hence monotone set and thus be equal to  $\pi^{*}$.
\end{proof}

We close this section with two examples which complement  Lemma \ref{NonEmptyR} and may help to clarify the role of Riemann-integrability.

\begin{example}\label{NotRINT}
Weak continuity of $t\mapsto \mu_t$ does not imply that all quantile paths $x\mapsto q_t(x)$ are Riemann-integrable:

 First we construct a set $C'$ that bears some resemblance to the Cantor-set, but is considerably bigger:
starting with the unit interval $[0,1]$, we cut out the middle section $A_1 = \bigl(a_1^{(1)}, b_1^{(1)}\bigr)$ with length $1/10$. From the remaining two intervals, we  cut out the middle sections $\bigl(a_1^{(2)}, b_1^{(2)}\bigr), \bigl(a_2^{(2)}, b_2^{(2)}\bigr)$ such that their combined length is $1/100$. In the $n$-th step, we cut out the $2^{n-1}$ middle sections   $\bigl(a_1^{(n)}, b_1^{(n)}\bigr), \dots, \bigl(a_{2^{n-1}}^{(n)}, b_{2^{n-1}}^{(n)}\bigr)$ such that their combined length is $1/10^n$. We denote the whole set cut out in the $n$-th step by $A_n$ and set $C'= [0,1] - A_1 - A_2 - \dots$. We have $\lambda(C') = 8/9$, and its indicator function is continuous in $x$ iff $x\notin C'$. Therefore,  $\mathbbm{1}_{C'}$ is not Riemann-integrable.\\ 
Next, we construct a function $f$ on $[0,1]$ as the supremum of functions $f_n$: let $f_1$ be the function on $[0,1]$ that on $\bigl[ a_1^{(1)}, b_1^{(1)} \bigr]$ linearly interpolates between $(a_1^{(1)}, 0), ((a_1^{(1)}+ b_1^{(1)})/{2}, {1}/{10})$ and $(b_1^{(1)}, 0)$, and is equal to zero else. Let $f_n$ be the function that, for each $i\in \{1, \dots, 2^{n-1} \}$, on $\bigl[a_i^{(n)}, b_i^{(n)}\bigr]$ linearly interpolates between $(a_i^{(n)}, 0)$, $((a_i^{(n)}+b_i^{(n)})/{2}, {1}/{10^n})$ and $(b_i^{(n)},0)$ and is equal to zero else. Then set $f = \sup_n f_n$. Note that $f$ is continuous on $[0,1]$.\\
We consider the  family of probability distributions given by
\beo
\mu_t = \bigl({1}/{2}- f(t) \bigr) \delta_0 + \bigl( {1}/{2} + f(t) \bigr) \delta_1.
\eeo
Due to the continuity of $f$, the family $\bigl( \mu_t \bigr)_{t \in [0,1]}$ is weakly continuous in $t$.  For each $t$, the quantile function $q_t(.)$ jumps  from $0$ to $1$ after  $1/2 - f(t)$. We hence find that  
\beo 
q_t(1/2) = 1 - \mathbbm{1}_{C'}(t). 
\eeo
So the path $t \mapsto q_{t}(1/2)$ is not Riemann-integrable.
\end{example}

\begin{example}\label{NewCont} In Lemma \ref{NonEmptyR} it is not possible to replace the set of Riemann-integrable functions with the set of continuous functions, even if the family $(\mu_t)$ is  assumed to consist of absolutely continuous measures:

To see this, let $f$ be a function $[0,1] \rightarrow [0,1]$ with the following properties: $f$ is strictly increasing and absolutely continuous, $f(0)=0$, $f(1)=1$, $f(x) > x$ else. \\
Then define, for each $t \in [0,1]$,  distribution functions on $[0,1]$ by
\begin{eqnarray*}
F_t (x) = f(x) ~ \text{ for } x \in [0, t] \\
F_t (x) = f(t) ~ \text{ for } x \in [t, f(t)]\\
F_t (x) = x ~ \text{ for } x \geq f(t).
\end{eqnarray*}
It is obvious that the family $(\mu_t)_{t\in[0,1]}$ corresponding to the family $(F_t)_{t\in[0,1]}$ is weakly continuous in $t$, and all $\mu_t$ are absolutely continuous with respect to $\lambda$. 
But all quantile paths have a discontinuity: 
let $x \in (0,1)$, and $t$ such that $f(t) = x$. Then the quantile path of $x$, $q_{.}(x)= q_{.}(f(t))$, is discontinuous in $t$: 
\begin{equation*}
q_{t}(x) = q_t(f(t)) = t,
\end{equation*}
but for  all $t' < t$ we will have 
\begin{equation*}
q_{t'}(x) = q_{t'}(f(t)) = f(t).
\end{equation*}
But $f(t) > t$, so $q_{.}(x)$ is not continuous in $t$. 
\end{example}

\section{Proof of Theorem \ref{MainTheorem}}
In the proof of Theorem \ref{MainTheorem} we will make use of the following result from \cite{BeGoMaSc09},  which is a  consequence of a duality result by Kellerer \cite{Ke84}:
\begin{lemma}[{\cite[Proposition 2.1]{BeGoMaSc09}}]\label{KellLemma} Let $(E, m)$ be a Polish probability space, and $M$ an  analytic\footnote{\cite[Proposition 2.1]{BeGoMaSc09} is stated only for Borel sets, but the same proof applies in the case where $M$ is analytic. } subset of $E^l$, then one of the following holds true:\begin{enumerate}
\item[(i)] there exist $m$-null sets $M_{1}, \dots, M_{l} \subseteq E$ such that $M \subseteq \bigcup_{i=1}^l p_{i}^{-1} (M_{i})$, or
\item[(ii)] there is a measure $\eta $ on $E^l$ such that $\eta (M)>0$ and $p_{i}(\eta) \leq m$ for $i=1, \dots, l$.
\end{enumerate}
\end{lemma}
\begin{proof}[Proof of Theorem \ref{MainTheorem}.]  
We want to find a finitely minimal set $\Gamma$ with $\P^{*}(\Gamma) = 1$. To obtain this, it is sufficient to show that for each $l \in \NN$ there is a set $\Gamma_{l}$ with $\P^{*} (\Gamma_{l}) = 1$ such that:  for any finite measure $\alpha$ concentrated on at most $l$ points in $\Gamma_{l}$ and satisfying $\alpha(E) \leq 1$ as well as $\int g\, d\alpha \leq l, |c_{\upharpoonright\supp \alpha}| \leq l$, there  is no $c$-better competitor $\alpha'$ on at most $l$ points and satisfying  $\int g\, d\alpha' \leq l, |c_{\upharpoonright\supp \alpha'}| \leq l$.  For then  $\Gamma := \bigcap_{l\in \NN} \Gamma_l$ is finitely minimal.

Hence, fix $l$ and define a subset of $E^l$,
\begin{align*}
M &=  \{ (z_{1}, \dots, z_{l}) \in E^l :\\
 &\textstyle 
 \exists \text{ a  measure }\alpha \text{ on } E, \alpha(E) \leq 1,  \int g\, d\alpha \leq l, \text{supp }\alpha\subset\{ z_{1}, \dots, z_{l}, |c_{\upharpoonright\supp \alpha}| \leq l \}, ~
\\  
&\textstyle\text{s.t.\ there is a $c$-better competitor }\alpha', \int g\, d\alpha' \leq l, |\supp \alpha' | \leq l, |c_{\upharpoonright\supp \alpha'}| \leq l \}. 
\end{align*}
Note that $M$ is the projection of the set 
\begin{align*}
\begin{split}
\hat M = & \Big\{ (z_1, \ldots, z_l, \alpha_1, \ldots, \alpha_l, z_1', \ldots, z_l', \alpha_1', \ldots, \alpha_l',) \in E^l \times \R_+^l \times E^l\times \R_+^l:\\
& \sum \alpha_i \leq 1, \sum \alpha_i g(z_i) \leq l, 
 \sum \alpha_i= \sum \alpha_i', \sum \alpha_i' g(z_i') \leq l, |c_{\upharpoonright\supp \alpha\cup \supp \alpha'}| \leq l \\
& \sum \alpha_i f(z_i)=\sum \alpha_i' f(z_i') \text{ for all $f\in \F$ },
 \sum \alpha_i c(z_i)>\sum \alpha_i' c(z_i')
\Big\}.
\end{split}
\end{align*}
onto the first $l$ coordinates.
By our Assumption \ref{BoundedAssumption}, the set $\hat M$ is Borel, hence $M$ is analytic.

We apply Lemma \ref{KellLemma} to  the space $(E, \P^*)$ and the set $M$: if (i) holds, then define $N := \bigcup_{i=1}^l M_{i}$. Then
$\Gamma_{l}:= E \setminus N$ has full measure,  $\P^{*}(\Gamma_{l})=1$.  From the definitions of $M$ and $N$ it can be directly seen that $\Gamma_{l}$ is as needed. 

If (i) does not hold, (ii) has to. Hence, let us derive a contradiction from it.

Write $p _{i}$ for the projection of an element of $E^l$ onto its $i$-th  component.
We may assume that the measure $\eta$ in (ii) is concentrated on $M$, and also  fulfills $p _{i}(\eta) \leq \frac{1}{l} \P^{*}$ for $i= 1, \dots, l$.

We now apply the Jankow -- von Neumann selection theorem to the set $\hat M$ to define a  mapping 
$$ z\mapsto \bigl( \alpha_1(z),\ldots, \alpha_l(z), z_1'(z), \ldots, z_l'(z),   
\alpha_1'(z),\ldots, \alpha_l'(z) \bigr) $$ 
such that 
$$ \bigl( z,\alpha_1(z),\ldots, \alpha_l(z), z_1'(z), \ldots, z_l'(z),   
\alpha_1'(z),\ldots, \alpha_l'(z) \bigr) \in \hat M $$
for $z\in M$, and the mapping is measurable with respect to the $\sigma$-field generated by the analytic subsets of $E^l$. 
Setting
$$
\alpha_z:=  \sum_{i} \alpha_i(z) \delta_{z_i}, 
\alpha'_z:=  \sum_{i} \alpha_i'(z) \delta_{z_i'(z)}
$$ 
we thus obtain kernels   $z\mapsto \alpha_z$, $z\mapsto \alpha'_z$ from $E^l$ with the $\sigma$-field  generated by its analytic subsets to $E$ with its Borel-sets. We use these kernels to define measures $\omega, \omega'$ on the Borel-sets on $E$ through 
\beo
\omega (B) =  \int \alpha_{z}(B) \, d\eta (z), \ \omega' (B) =  \int \alpha'_{z}(B) \, d\eta (z).
\eeo  
By construction $\omega \leq \P^{*}$. 
Moreover  $\omega' $ is  a $c$-better competitor of $\omega$:
for each $f\in \cF$ we have
\beo
\int f \; d \omega' = \int\!\!\!\!\int f \; d\alpha'_{z}d \eta (z) = \int\!\!\!\!\int f \; d\alpha_{z}d\eta (z) = \int f \; d\omega. 
\eeo
Note that the first and last equality are justified  since $\int g \, d\alpha_{z}$, $  \int g \, d\alpha'_{z} \leq l$ for all $z$.
Similarly, since $|c| \leq l$, $(\omega+\omega')$-a.s. we obtain
\beo
\int c \; d \omega' = \int\!\!\!\!\int c \; d\alpha'_{z}d \eta(z) < \int\!\!\!\!\int c \; d\alpha_{z}d\eta(z) = \int c \; d\omega. 
\eeo
Summing up, we obtain a probability measure  $\P' := \P^{*} - \omega + \omega'$ with \\  $\int c \, d \P' < \int c \, d \P^{*}$ and $\P'\in \Pi_\F$. 
This contradicts the optimality of $\P^{*}$.
\end{proof}

\bibliographystyle{plain}

\bibliography{alle2}
\end{document}